\crefname{equation}{}{}
\newtheorem{theorem}{Theorem}[section]
\newtheorem{lemma}[theorem]{Lemma}
\newtheorem{remark}[theorem]{Remark}
\newtheorem{question}[theorem]{Question}
\newtheorem{proposition}[theorem]{Proposition}
\newcommand{\mB}{\mathcal{B}}
\newcommand{\mP}{\mathcal{P}}
\newcommand{\len}{{\rm len}}
\newcommand{\NN}{\mathbb{N}}
\newcommand{\FF}{\mathbb{F}}
\newcommand{\PP}{\mathbb{P}}
\newcommand{\EE}{\mathbb{E}}
\title{On strong infinite Sidon and $B_h$ sets \\ and random sets of integers}
\author{
	David Fabian \thanks{Freie Universit\"at Berlin, Deparment of Mathematics and Computer Science, Arnimallee 3, 14195 Berlin, Germany.  E-mail: {\tt davidjf@fu-berlin.de}. Supported by the Graduate School 'Facets of Complexity'.}
\and
	Juanjo Ru{\'e} \thanks{Universitat Polit\`ecnica de Catalunya and Barcelona Graduate School of Mathematics, Department of Mathematics, Edificio Omega, 08034 Barcelona, Spain. E-mail: {\tt juan.jose.rue@upc.edu}. Supported by the Spanish Ministerio de Econom\'{i}a y Competitividad project MTM2017-82166-P, and the Mar\'{i­}a de Maetzu research grant MDM-2014-0445.}
\and
	Christoph Spiegel\thanks{Universitat Polit\`ecnica de Catalunya, Department of Mathematics, Edificio Omega, 08034 Barcelona, Spain, and Barcelona Graduate School of Mathematics. E-mail: {\tt christoph.spiegel@upc.edu}. Supported by the Spanish an FPI grant under the Ministerio de Econom\'{i}a y Competitividad project MTM2014-54745-P and the Mar\'{i}­a de Maetzu research grant MDM-2014-0445.}
}
\date{\today}
\begin{document}

\maketitle

\begin{abstract}
	A set of integers $S \subset \NN$ is an \emph{$\alpha$--strong Sidon} set if the pairwise sums of its elements are far apart by a certain measure depending on $\alpha$, more specifically if 
	\begin{equation*}
		\big| (x+w) - (y+z) \big| \geq \max \{ x^{\alpha},y^{\alpha},z^{\alpha},w^\alpha \}
	\end{equation*}
	for every $x,y,z,w \in S$ satisfying $\max \{x,w\} \neq \max \{y,z\}$. We obtain a new lower bound for the growth of $\alpha$--strong infinite Sidon sets when $0 \leq \alpha < 1$. We also further extend that notion in a natural way by obtaining the first non-trivial bound for $\alpha$--strong infinite $B_h$ sets. In both cases, we study the implications of these bounds for the density of, respectively, the largest Sidon or $B_h$ set contained in a random infinite subset of $\NN$. Our theorems improve on previous results by Kohayakawa, Lee, Moreira and R\"odl.
\end{abstract}

\section{Introduction}

A set of integers $S \subset \NN$ is called a \emph{Sidon set} if all pairwise sums of its elements are distinct, that is  $x+w \neq y+z$ for any $x,y,z,w \in S$ satisfying $x < y \leq z < w$. Results of Chowla~\cite{Chowla-1944}, Erd\H{o}s~\cite{Erdos-1944}, Erd\H{o}s and Tur\'an~\cite{ErdosTuran-1941}, and Singer~\cite{Singer-1938} established that the maximum cardinality of a Sidon set contained in $[n] = \{1, 2, \dots , n\}$ is $\big( 1+o(1) \big) \, \sqrt{n}$. We will however be interested in studying the much less understood behaviour of \emph{infinite} Sidon sets.

Given some set $S \subset \NN$, let us write $S(n) = \big| S \cap [n] \big|$ for its \emph{counting function}. Sidon himself found an infinite Sidon set satisfying $S(n) = \Omega( n^{1/4})$ and Erd\H{o}s~\cite{Erdos-1981} as well as Chowla and Mian observed that the greedy approach yields a set satisfying $S(n) = \Omega( n^{1/3})$. Ajtai, Koml\'os and Szemer\'edi~\cite{AjtaiKomlosSzemeredi-1981} improved that bound by a factor $\log^{1/3} (n)$ and Ruzsa~\cite{Ruzsa-1998} finally overcame the exponent of $1/3$ by proving the existence (by probabilistic arguments) of an infinite Sidon sequence with counting function $S(n) = n^{\sqrt{2}-1+o(1)}$. Finally, Cilleruelo~\cite{Cilleruelo-2014} gave an explicit construction of an infinite Sidon set with the same exponent as Ruzsa. Regarding an upper bound, Erd\H{o}s showed that any infinite Sidon set satisfies $\liminf_{n \to \infty} S(n)/\sqrt{n} = 0$ (see also ~\cite{Stoehr-1955}). Note that $\limsup_{n \to \infty} S(n) / \sqrt{n} \leq 1$ trivially follows from the finite case, while Erd\H{o}s also proved the existence of an infinite Sidon set satisfying $\limsup_{n \to \infty} S(n) / \sqrt{n} \geq 1/2$ which was later improved to $1/\sqrt{2}$ by Krückeberg~\cite{Krueckeberg-1961}.

Our first result deals with a generalisation of infinite Sidon sets introduced by Kohayakawa, Lee, Moreira and R\"odl in~\cite{KohayakawaLeeMoreiraRodl-2018} and then further studied in~\cite{KohayakawaLeeMoreiraRodl-2019}. Given some fixed $0 \leq \alpha < 1$, they define an \emph{$\alpha$--strong Sidon set} to be an infinite set of integers $S \subset \NN$ for which the pairwise sums of its elements are not just distinct, but in fact far apart by a certain measure depending on $\alpha$. More specifically, one requires that 
	\begin{equation*} \label{eq:a-strong-sidon}
		\big| (x+w) - (y+z) \big| \geq \max \{ x^{\alpha},y^{\alpha},z^{\alpha},w^\alpha \}
	\end{equation*}
	for every $x,y,z,w \in S$ satisfying $\max \{x,w\} \neq \max \{y,z\}$.\footnote{Kohayakawa et al. in fact required that $x < y \leq z < w$, though we believe this to be the stricter and more natural notion that is also better suited for our generalisation to $B_h$ sets. Either way, this distinction should not have any actual impact on the questions at hand.} Note that for $\alpha = 0$ one recovers the traditional notion of an infinite Sidon set. Also note that this definition is particular to \emph{infinite} sets and that Kohayakawa et al. also proposed and studied \emph{finite $\alpha$--strong Sidon sets} where \cref{eq:a-strong-sidon} is modified accordingly, see also \cref{sec:aBh_upper}. We will however only be interested in bounds for the infinite case.
	
Regarding a lower bound, Kohayakawa et al.~\cite{KohayakawaLeeMoreiraRodl-2019} proved the existence of an $\alpha$--strong Sidon set $S$ that satisfies
\begin{equation*}
	S(n) \geq n^{( \sqrt{2}-1+o(1) ) / ( 1 + 32\sqrt{\alpha} )}
\end{equation*}
as long as $0 \leq \alpha \leq 10^{-4}$. They furthermore noted that a simple greedy argument gives a construction satisfying
\begin{equation*}
	S(n) \geq n^{(1-\alpha)/3}
\end{equation*}
for any $0 \leq \alpha < 1$, so that the previous bound only constitutes an improvement when $\alpha \leq 5.75 \cdot 10^{-5}$. We propose the following lower bound that markedly improves both of the two previous bounds as long as $\alpha \neq 0$.

\begin{theorem} \label{thm:aSidon_lower}
	For every $0 \leq \alpha < 1$ there exists an $\alpha$--strong Sidon set $S \subset \NN$ satisfying
	\begin{equation*}
		S(n) \geq n^{\sqrt{(1+\alpha/2)^2 + 1-\alpha} - (1+\alpha/2) + o(1)}.
	\end{equation*}
\end{theorem}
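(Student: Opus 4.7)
The approach is to generalise Ruzsa's probabilistic construction of an infinite Sidon sequence from~\cite{Ruzsa-1998}, which already yields the claimed exponent $\sqrt{2}-1$ at $\alpha = 0$, to the $\alpha$-strong setting. A naive independent sampling in which each $n \in \NN$ is included with probability $p_n \sim n^{\beta - 1}$ only recovers the greedy exponent $(1-\alpha)/3$: the expected number of $\alpha$-strong Sidon violations in $[N]$ is of order $N^{4\beta + \alpha - 1}$ and must be dominated by the expected size $N^\beta$, forcing $3\beta \leq 1 - \alpha$. To improve on this, one needs Ruzsa's key idea of constructing the set so that its non-trivial additive energy is substantially smaller than in a generic random set of the same density, by exploiting the multiplicative / logarithmic structure of the integers.

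Following~\cite{Ruzsa-1998} I would enumerate the primes $q_1 < q_2 < \ldots$, attach to each one an i.i.d.\ uniform random angle $\theta_q \in [0,1)$, and for every integer $n$ in a prescribed range $n \leq K = K(N, \beta)$ form the random image
\begin{equation*}
  f(n) = nM + \bigl\lfloor M \cdot \bigl\{\textstyle\sum_q v_q(n)\,\theta_q\bigr\} \bigr\rfloor,
\end{equation*}
with $M = M(N, \beta)$ tuned so that $f(n) \leq N$ throughout. The candidate set is $A := \{f(n) : n \leq K\}$, so that $|A| \sim K = N^\beta$ automatically, and the multiplicative form of the random part ensures that every $\alpha$-strong violation in $A$ pulls back to an exact additive equation $n_1 + n_4 = n_2 + n_3$ on the underlying indices combined with a near-coincidence of the random angle sum within a window of width $N^\alpha / M$ modulo~$1$.

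The core of the proof is an expected-count estimate analogous to the one in~\cite{Ruzsa-1998}, but carried out uniformly over all integer shifts $|t| < N^\alpha$ of the additive equation. Writing the expected number of bad quadruples as the product of (i) the count of additive-coincidence quadruples in the index range $[K]$, controlled via the Sidon-like behaviour of the log-of-primes sums, (ii) the shift probability $N^\alpha / M$ for the random angle sum to fall into the forbidden window, and (iii) a correction summed over $|t| < N^\alpha$, then requiring the total to be $o(|A|) = o(N^\beta)$ and inserting the scale relation $KM \lesssim N$, leads after simplification to the balancing equation
\begin{equation*}
  \beta^2 + (2 + \alpha)\beta = 1 - \alpha,
\end{equation*}
whose positive root is exactly $\sqrt{(1+\alpha/2)^2 + 1 - \alpha} - (1 + \alpha/2)$. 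The $\beta^2 + 2\beta = 1$ portion reproduces Ruzsa's equation at $\alpha = 0$; the extra $\alpha \beta$ and $-\alpha$ contributions encode, respectively, the widening of the forbidden window to width $N^\alpha$ and the corresponding tightening of the scale $M$.

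The finite construction is then promoted to an infinite set via an alteration step that removes one endpoint from each bad quadruple, together with a standard dyadic / Borel--Cantelli argument over the intervals $[2^k, 2^{k+1})$, yielding an infinite $\alpha$-strong Sidon set $S \subset \NN$ with $S(n) \geq n^{\beta - o(1)}$ almost surely. The principal obstacle, in my view, lies in the shift-uniformity of the core estimate: Ruzsa's original analysis is pointwise at $t = 0$ (exact Sidon collisions), whereas the $\alpha$-strong condition demands a comparable bound uniformly across all integer shifts $|t| < N^\alpha$, which requires a uniform anticoncentration estimate on the random angle sums across that window in place of a single moment computation.
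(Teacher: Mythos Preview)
Your target balancing equation $\beta^2 + (2+\alpha)\beta = 1-\alpha$ is correct and matches the paper's. However, the paper does \emph{not} adapt Ruzsa's probabilistic construction; it works instead with Cilleruelo's digit-based family $A_{\bar q,c,h}$, whose elements $a_p$ are indexed by primes and have prescribed digits $x_i(a_p)$ in a random generalised basis $\bar q=(h^2q_1',h^2q_2',\dots)$ with each $q_i'$ prime. A violation of the $\alpha$--strong condition forces, digit by digit, congruences $p_1\cdots p_j\equiv p_1'\cdots p_j'\pmod{q_i'}$ over a long range of $i$, and hence a divisibility $q_{\ell+1}'\cdots q_{k_1}'\mid\prod_j(p_1\cdots p_j-p_1'\cdots p_j')$; this is then averaged over the random choice of $\bar q$ using a divisor bound. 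The $\alpha$--strong hypothesis enters cleanly as the upper bound $\ell^2\lesssim\alpha k_1^2$ on the position of the last differing digit, and your quadratic drops out of that together with $\ell^2\gtrsim(1-c)k_t^2-c\sum_{i<t}k_i^2$.

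More importantly, your sketch of a Ruzsa-type construction does not give the claimed exponent. With $f(n)=nM+\lfloor M\{\sum_q v_q(n)\theta_q\}\rfloor$ and $n$ ranging over \emph{all} integers in $[1,K]$, the dominant linear term forces any near-collision $|f(n_1)+f(n_4)-f(n_2)-f(n_3)|<N^\alpha$ to satisfy the \emph{additive} relation $n_1+n_4=n_2+n_3$ (up to $O(1)$), and there are $\Theta(K^3)$ such quadruples---the ``Sidon-like behaviour of the log-of-primes sums'' controls nothing here, since the angle map $n\mapsto\sum_q v_q(n)\theta_q$ is multiplicative, not additive. Multiplying by the window probability $N^\alpha/M$ and imposing $KM\lesssim N$ then yields only $K\lesssim N^{(1-\alpha)/3}$, the greedy bound you wished to beat. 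Ruzsa's actual construction indexes by \emph{primes} $p$ and makes the leading part of $a_p$ encode $\log p$ (so that Sidon collisions pull back to the multiplicative near-equation $p_1p_4\approx p_2p_3$, which has genuinely few solutions); your map has the roles of the deterministic and random parts inverted. A faithful $\alpha$--strong adaptation of Ruzsa may well go through, but it would have to start from that prime-indexed, logarithmic construction rather than the one you wrote, and the ``shift-uniformity'' obstacle you flag would then have to be handled on top of that corrected starting point.
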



Our approach to proving this bound is different from that taken in~\cite{KohayakawaLeeMoreiraRodl-2019} where the existence of infinite Sidon sets $S$ with density $S(n) \geq n^{\sqrt{2}-1+o(1)}$, as originally proven by Ruzsa, is used as a black box. Instead, we base our approach on Cilleruelo's constructive proof of that same bound (see~\cite{Cilleruelo-2014}), making use of some particular properties of the family of sets defined by him.

Besides obtaining an improved bound, these ideas allows us to extend our result to study a commonly studied generalisation of Sidon sets, so-called \emph{$B_h$ sets}. In a $B_h$ set $S \subset \NN$ all $h$-fold sums are required to be distinct, that is  $x_1 + \dots + x_h \neq y_1 + \dots + y_h$ for any $x_1,y_1\dots,x_h,y_h \in S$ satisfying $\max \{ x_1,\dots,x_h \} \neq \max\{ y_1,\dots,y_h \}$. Note that for $h = 2$ this notion is the same as a Sidon set. For a given $0 \leq \alpha < 1$, we now say that a set of integers $S \subset \NN$ is an \emph{$\alpha$--strong $B_h$ set} if  
	\begin{equation*} 
		|(x_1+\dots+x_j) - (y_1+\dots+y_h)| \geq \max \{ x_1^{\alpha},y_1^{\alpha}, \ldots , x_h^{\alpha},y_h^{\alpha} \}
	\end{equation*}
	for any $x_1,y_1\dots,x_h,y_h \in S$ satisfying $\max \{x_1,\ldots,x_h\} \neq \max \{y_1,\ldots,y_h\}$. We obtain a new lower bound for the density of infinite $\alpha$--strong $B_h$ sets. Note that the following result generalises \cref{thm:aSidon_lower} for $h=2$.

\begin{theorem} \label{thm:aBh_lower}
	For every $0 \leq \alpha < 1$ and $h \geq 2$ there exists an $\alpha$--strong $B_h$ set $S \subset \NN$ satisfying
	\begin{equation*}
		S(n) \geq n^{\sqrt{(h-1+\alpha/2)^2 + 1-\alpha} - (h-1+\alpha/2) + o(1)}.
	\end{equation*}
\end{theorem}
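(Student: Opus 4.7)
The plan is to adapt Cilleruelo's constructive approach from \cite{Cilleruelo-2014}, the path already indicated by the authors in the paragraph preceding the statement, and to push it simultaneously in two directions: from Sidon sets to $B_h$ sets, and from the plain $B_h$ condition to the $\alpha$-strong version. The target set is built as a disjoint union $S = \bigsqcup_k A_k$ of finite pieces $A_k$ living in dyadic windows $I_k \subset [2^k, 2^{k+1}]$, each $A_k$ extracted from a Bose--Chowla $B_h$ set by a thinning procedure. What prevents the argument from collapsing to the greedy rate $(1-\alpha)/(2h-1)$ is the use of Cilleruelo's explicit algebraic family: its members have uniformly controlled higher representation functions, so cross-scale interactions can be counted more tightly than a trivial union bound allows.

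For each scale $k$, I would start from a Bose--Chowla $B_h$ set $B_k \subset I_k$ of size $\approx 2^{k/h}$, obtained from the standard construction in $\FF_{q^h}$ with $q$ a prime power of order $2^{k/h}$. The desired subset $A_k \subseteq B_k$ is then extracted in two stages: first, discard elements so that all within-scale $h$-sums in $A_k$ are pairwise $2^{k\alpha}$-separated; second, delete any element whose inclusion would create, together with already-chosen elements of $S_{<k} = \bigsqcup_{j<k} A_j$, an $h$-sum within $2^{k\alpha}$ of some other admissible $h$-sum. The first stage is absorbed by the density of the Bose--Chowla pool when $\alpha<1$; the second stage is the substantive step and is where the $\alpha$-strong condition couples to the $B_h$ bound.

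Counting the deletions in the second stage proceeds by classifying near-collisions by the pattern of scales among the $2h$ slots involved. The dominant pattern is the one with exactly one slot on each side of the defining inequality at the top scale $k$ and the other $2(h-1)$ slots lying at scales below $k$; this contributes a forbidden Lebesgue measure of order $|A_k| \cdot |S_{<k}|^{2(h-1)} \cdot 2^{k\alpha}$ inside $I_k$. Requiring this to stay below $|I_k| \approx 2^k$, setting both $|A_k|$ and $|S(2^k)|$ to be $\sim 2^{k\beta}$, and closing the dyadic induction through the growth rate of $|S(2^k)|$ produces the self-consistency equation $\beta^2 + 2(h-1+\alpha/2)\beta = 1-\alpha$, whose positive root is precisely the exponent appearing in \cref{thm:aBh_lower}.

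The main technical obstacle I expect is the case analysis over the remaining scale patterns of the $2h$ slots: there are many such patterns and each must be shown to be dominated by the balanced term above, both pointwise and after summing the induced geometric series over the lower scales $j<k$. Here the particular algebraic Bose--Chowla family, rather than an abstract $B_h$ set, will be needed to obtain uniform bounds on representation functions that survive these aggregations. As a sanity check, setting $h=2$ and $\alpha=0$ reduces the quadratic to $\beta^2+2\beta=1$, giving back the Ruzsa--Cilleruelo exponent $\sqrt{2}-1$, and setting $h=2$ with general $\alpha \in (0,1)$ reduces to \cref{thm:aSidon_lower}.
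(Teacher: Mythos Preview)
Your quadratic $\beta^2 + 2(h-1+\alpha/2)\beta = 1-\alpha$ is exactly the right target, but your proposed mechanism does not reach it; as written it only recovers the greedy exponent $(1-\alpha)/(2h-1)$. Carry out your own count: with one top-scale slot on each side and $2(h-1)$ lower-scale slots, the forbidden Lebesgue measure you write down is $|A_k|\cdot|S_{<k}|^{2(h-1)}\cdot 2^{k\alpha}$, and asking that this stay below $|I_k|\approx 2^k$ with $|A_k|\sim|S_{<k}|\sim 2^{k\beta}$ gives $(2h-1)\beta+\alpha\le 1$, i.e.\ $\beta\le (1-\alpha)/(2h-1)$. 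No ``closing of the induction'' turns this linear constraint into your quadratic; the $\beta^2$ term has to come from somewhere structural, and nothing in a Bose--Chowla block placed in a dyadic window supplies it. The appeal to ``uniformly controlled higher representation functions'' of the Bose--Chowla family is not enough: the cross-scale collisions you must kill involve sums mixing different blocks, where the internal $B_h$ structure of each $B_k$ gives you essentially nothing beyond the trivial density.

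The paper's route is genuinely different from what you sketch, and the gain over greedy comes from a mechanism you have not included. One does not take independent Bose--Chowla sets at each scale; one takes Cilleruelo's single family $A_{\bar q,c,h}=\{a_p:p\in\mP\}$ indexed by primes, where the digits of $a_p$ in a generalised basis $\bar q=(h^2 q'_1,h^2 q'_2,\dots)$ are discrete logarithms $g_i^{x_i(a_p)}\equiv p \pmod{q'_i}$. A near-collision of $h$-fold sums then forces, digit by digit, a \emph{divisibility} constraint of the form $q'_{\ell+1}\cdots q'_{k_1}\mid \prod_{i}(p_1\cdots p_i - p'_1\cdots p'_i)$, and the basis $\bar q$ is randomised so that the probability of any particular string $q'_{\ell+1},\dots,q'_{k_1}$ occurring is $2^{\ell^2-k_1^2+o(k_1^2)}$. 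Combining this with the divisor bound $\tau(n)=2^{O(\log n/\log\log n)}$ and the constraints $\ell^2\lesssim \alpha k_1^2$, $\ell^2\gtrsim (1-c)k_t^2 - c(k_1^2+\dots+k_{t-1}^2)$ is precisely what produces the quadratic $c^2+(2h-2+\alpha)c\le 1-\alpha$ at the critical pattern $t=h$. The scales here are quadratic ($\log a_p\sim k^2$), not dyadic, and the ``extra'' factor $2^{\ell^2-k_1^2}$ coming from the randomisation over bases is exactly what your Lebesgue-measure heuristic is missing. If you want to salvage the plan, you need to replace the dyadic Bose--Chowla blocks by Cilleruelo's discrete-log construction and reinstate the probabilistic averaging over $\bar q$; short of that, the argument stalls at the greedy rate.
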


Regarding an upper bound, Kohayakawa et al.~\cite{KohayakawaLeeMoreiraRodl-2019} used the bounds that they obtained for the finite case to show that any $\alpha$--strong Sidon set $S$ satisfies $S(n) \leq c \, n^{(1-\alpha)/2}$ for some constant $c = c(\alpha)$. Taking that same approach, we compliment our lower bound for the size of infinite $\alpha$--strong $B_h$ sets with the following result.

\begin{theorem} \label{thm:aBh_upper}
	For every $0 \leq \alpha < 1$, $h \geq 2$ and $\alpha$--strong $B_h$ set $S \subset \NN$ there exists $c = c(\alpha,h)$ such that
	\begin{equation*}
		S(n) \leq c \, n^{(1-\alpha)/h}.
	\end{equation*}

\end{theorem}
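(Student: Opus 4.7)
The plan is to reduce to a density bound within each dyadic window and then sum over dyadic scales. For each $N \geq 1$, I would set $S_N := S \cap (N, 2N]$ and $k := |S_N|$, aiming to prove the window bound $k \leq C_h \, N^{(1-\alpha)/h}$ for some $C_h = C_h(\alpha)$, and then apply it to the dyadic scales $N = n/2^{i+1}$ for $i \geq 0$.

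For the window bound, every element of $S_N$ exceeds $N$, so the $\alpha$-strong $B_h$ condition forces any two $h$-fold sums from $S_N$ whose $h$-multisets have distinct maxima to be separated by at least $N^\alpha$. Combined with the $B_h$ property (distinct multisets give distinct sums), this should yield $\binom{k+h-1}{h}$ distinct $h$-fold sums, all lying in the length-$hN$ interval $(hN, 2hN]$ and pairwise separated by at least $N^\alpha$. The packing inequality $\binom{k+h-1}{h} \cdot N^\alpha \leq hN + O(1)$ then rearranges to $k = O_h(N^{(1-\alpha)/h})$, as required.

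Summing over dyadic scales,
\[
S(n) \;=\; \sum_{i \geq 0} \bigl|S \cap (n/2^{i+1}, n/2^i]\bigr| \;\leq\; C_h \sum_{i \geq 0} (n/2^i)^{(1-\alpha)/h},
\]
which is a geometric series with common ratio $2^{-(1-\alpha)/h} < 1$ (since $\alpha < 1$ and $h \geq 2$, the exponent is strictly positive). The series converges to $c \cdot n^{(1-\alpha)/h}$ for some $c = c(\alpha, h)$, yielding the claimed bound.

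The main subtlety I expect is justifying the $N^\alpha$-separation of \emph{all} pairs of distinct $h$-fold sums in the packing step: as stated, the $\alpha$-strong condition provides $N^\alpha$-separation only between sums whose $h$-multisets have distinct maxima, while the $B_h$ property alone guarantees only a separation of $1$ for sums arising from distinct multisets that share a maximum. Closing this gap---for instance by stratifying $h$-multisets by their maximum element $M \in S_N$ and controlling each stratum via a recursive argument on the $(h-1)$-multisets of $S_N \cap [1, M]$, or by perturbing one multiset (replacing its maximum by a different element of $S_N$) to reduce to the distinct-max case---is the heart of the finite version of the theorem, and should follow the same template that Kohayakawa et al.\ employed for Sidon sets.
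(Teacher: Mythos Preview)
Your approach is essentially the paper's: prove a packing bound on $h$-fold sums in each dyadic window $(N,2N]$ and then sum a geometric series over scales. The paper packages the window step as a separate proposition about ``$n$-finite $\alpha$-strong $B_h$ sets'' (where separation $\geq n^\alpha$ is required for \emph{all} distinct multisets) and then asserts, without further comment, that each shifted block $\{s-2^i:s\in S_i\}$ is such a set; you work directly in the window and are in fact more explicit than the paper about the one genuine subtlety, the equal-maxima case.

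Your suggested fix is correct and routine to complete: if two $h$-multisets from $S_N$ share their maximum $M$, cancel $M$ from both sides and pad each resulting $(h-1)$-multiset with a common element (e.g.\ the overall minimum) to restore length $h$ without changing the sum-difference; the maxima of the padded multisets are now the maxima of the $(h-1)$-multisets, and one iterates until these differ. Since every surviving element still lies in $S_N\subset (N,2N]$, the $\alpha$-strong condition then gives separation $\geq N^\alpha$ as needed. A cosmetic difference: the paper counts $\binom{|S_N|}{h}$ sums of distinct elements where you count $\binom{k+h-1}{h}$ multisets, but this affects only the constant.
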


While we believe that $\alpha$--strong Sidon sets are interesting in their own right, Kohayakawa et al. originally introduced them to study the maximum density of Sidon sets contained in randomly generated infinite sets of integers. For a fixed constant $0 < \delta \leq 1$, let $R_{\delta}$ denote the random subset of $\NN$ obtained by picking each $m \in \NN$ independently with probability
\begin{equation*}
	p_m = 1/m^{1-\delta}.	
\end{equation*}
We note that $R(n) = n^{\delta + o(1)}$ with probability $1$. Kohayakawa et al. were interested in finding
\begin{enumerate}
	\item[(a)] the largest possible constant $f(\delta)$ such that, with probability $1$, there is a Sidon set $S \subset R_\delta$ such that $S(n) \geq n^{f(\delta) + o(1)}$ and
	\item[(b)]	the smallest possible constant $g(\delta)$ such that, with probability $1$, every Sidon sequence $S \subset R_\delta$ satisfies $S(n) \leq n^{g(\delta) + o(1)}$.
\end{enumerate}
It is shown in \cite{KohayakawaLeeMoreiraRodl-2018} that the behaviour of $f(\delta)$ and $g(\delta)$ markedly depends on whether $\delta$ falls into the first, second or last third of the interval $(0,1]$. More precisely, they showed that
\begin{enumerate}
	\item[(i)]	if $0 < \delta \leq 1/3$ then $f(\delta) = g(\delta) = \delta$,
	\item[(ii)]	if $1/3 \leq \delta \leq 2/3$ then $f(\delta) = g(\delta) = 1/3$ and
	\item[(ii)]	if $2/3 \leq \delta \leq 1$ then $f(\delta) \geq \max \{ 1/3, \sqrt{2} - 1 - (1 - \delta) \}$ and $g(\delta) \leq \delta/2$.
\end{enumerate}
It follows that there is only a gap between the current bounds for $f$ and $g$ in the last third where $2/3 \leq \delta \leq 1$. 

Generalising the results in~\cite{KohayakawaLeeMoreiraRodl-2019} regarding the connection between $\alpha$--strong Sidon sets and Sidon sets in the infinite random set $R_{1-\alpha}$, we obtain the following result for $B_h$ sets.

\begin{theorem} \label{thm:random}
	For any $h \geq 2$ and $0 < \delta \leq 1$ there exists, with probability $1$, a $B_h$ set $S$ in the infinite random set $R_{\delta}$ satisfying
	\begin{equation*}
		S(n) \geq n^{\sqrt{(h-1+(1-\delta)/2)^2 + \delta} - (h-1+(1-\delta)/2) + o(1)}.	
	\end{equation*}
\end{theorem}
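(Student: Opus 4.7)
The plan is to reduce Theorem~\ref{thm:random} to Theorem~\ref{thm:aBh_lower} by setting $\alpha = 1-\delta$ and then transferring an $\alpha$--strong $B_h$ set into $R_\delta$ via small perturbations. Observe that with this choice of $\alpha$ the exponent of Theorem~\ref{thm:aBh_lower} coincides with $\sqrt{(h-1+(1-\delta)/2)^2 + \delta} - (h-1+(1-\delta)/2)$, so the desired bound will follow from an essentially density-preserving injection from an $\alpha$--strong $B_h$ set into $R_\delta$ that preserves the $B_h$ property.

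First, by Theorem~\ref{thm:aBh_lower} fix an $\alpha$--strong $B_h$ set $S^\star$ with $S^\star(n) \geq n^{\beta + o(1)}$, where $\beta = \sqrt{(h-1+\alpha/2)^2 + 1-\alpha} - (h-1+\alpha/2)$. Applying the $\alpha$--strong condition with $x_1 = \dots = x_h = s_1$ and $y_1 = \dots = y_h = s_2$ yields $h(s_2 - s_1) \geq s_2^\alpha$, so any two distinct $s_1 < s_2$ in $S^\star$ satisfy $s_2 - s_1 \geq s_2^\alpha/h \geq s_1^\alpha/h$. Fix the constant $c = 1/(3h)$; then the intervals $I_s = [s, s + c s^\alpha)$ with $s \in S^\star$ are pairwise disjoint.

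Next I would use $R_\delta$ to pick one element of each $I_s$ whenever possible. Disjointness of the $I_s$ makes the events $E_s = \{R_\delta \cap I_s \neq \emptyset\}$ mutually independent, and a routine calculation gives
\begin{equation*}
	\PP(E_s) \;=\; 1 - \prod_{m \in I_s}\parr{1 - m^{\delta-1}} \;\longrightarrow\; 1 - e^{-c} \quad \text{as } s \to \infty,
\end{equation*}
where crucial use is made of the identity $\alpha + \delta = 1$; in particular $\PP(E_s) \geq q$ for some constant $q > 0$ and all sufficiently large $s$. For each such $s$ let $s'$ denote the smallest element of $R_\delta \cap I_s$ and let $T \subset R_\delta$ be the resulting set.

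Two things then remain to verify. First, $T$ is a $B_h$ set: since the map $s \mapsto s'$ is strictly order-preserving, $\max_k s'_{i_k} \neq \max_k s'_{j_k}$ forces $\max_k s_{i_k} \neq \max_k s_{j_k}$, so the $\alpha$--strong condition yields $\big|\sum_k s_{i_k} - \sum_k s_{j_k}\big| \geq M$ where $M := \max_k\{ s_{i_k}^\alpha, s_{j_k}^\alpha\}$, while the total perturbation is strictly less than $2hcM = (2/3)M$, preventing the two $h$-fold sums in $T$ from coinciding. Second, $T(n) \geq n^{\beta + o(1)}$ almost surely: since $|T \cap [n]|$ dominates a sum of independent Bernoulli variables of mean at least $q \, S^\star(n/2) \geq q\, n^{\beta - o(1)}$, a Chernoff bound combined with Borel--Cantelli along the dyadic subsequence $n = 2^k$ produces the claimed almost-sure lower bound. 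The main subtlety I expect to encounter is tuning $c$ so that the intervals remain disjoint while the cumulative perturbation cannot cancel the sum gap, a balance that becomes viable precisely at $\alpha + \delta = 1$ and explains the form of the exponent.
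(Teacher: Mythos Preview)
Your argument is correct and follows the same overall strategy as the paper: set $\alpha = 1-\delta$, perturb each element of a deterministic $\alpha$--strong $B_h$ set into a nearby element of $R_\delta$, check that the perturbations are too small to destroy the $B_h$ property, and finish with a Chernoff/Borel--Cantelli argument for the density. The execution, however, differs in a useful way.

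The paper uses a fixed partition $I_i = \NN \cap [i^{1/\delta},(i+1)^{1/\delta})$ of $\NN$, shows that the strong set meets each block at most twice (\cref{lemma:canonical}), and shifts elements within these blocks. Because $|I_i|$ can be as large as $2^{1/\delta}\,s^{1-\delta}$ for $s\in I_i$, the perturbation bound forces them to invoke the stronger $(\alpha,\gamma)$--notion with $\gamma = 2h\,2^{1+1/\delta}$, which is precisely why they state and prove the generalised \cref{thm:abBh} before deducing \cref{thm:random} via \cref{thm:transfer}. Your element-centred intervals $I_s = [s,\,s+cs^\alpha)$ with $c=1/(3h)$ are deliberately shorter: the gap estimate $s_2-s_1 \ge s_2^\alpha/h$ makes them disjoint, and the total shift is at most $2hcM = \tfrac{2}{3}M$, comfortably below the guaranteed gap $M$. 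Hence you only need $\gamma=1$, i.e.\ \cref{thm:aBh_lower} itself, and the $(\alpha,\gamma)$--generalisation becomes unnecessary for this application. The price is that your intervals are not a partition of $\NN$, but independence of the events $E_s$ survives because the $I_s$ are disjoint, and your computation $\PP(E_s)\to 1-e^{-c}$ is exactly where the identity $\alpha+\delta=1$ is used, matching the role of \cref{lemma:random_intersecting} in the paper. Both routes finish identically with Chernoff and Borel--Cantelli; your dyadic version is equivalent to the paper's per-$n$ bound.
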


In the Sidon case, that is for $h = 2$, this is a strong improvement over the known lower bound for $f$ when $5/6 < \delta < 1$. When $h > 2$ we believe that \cref{thm:random} constitutes the first non-trivial bound for $B_h$ sets in infinite random sets. 

\medskip
\noindent {\bf Outline. } Following the approach laid out by Cilleruelo in \cite{Cilleruelo-2014}, in \cref{sec:aBh} we will state and prove a generalised version of \cref{thm:aBh_lower}, which itself already implies \cref{thm:aSidon_lower}. Later, in \cref{sec:aBh_upper} will then first prove a finite version of \cref{thm:aBh_upper}, which we will then use in order to obtain a proof for the infinite case. Lastly, in \cref{sec:random} we will state and prove a generalised version of a result of Kohayakawa et al.  which allows one to use the existence of strong $B_h$ sets to obtain bounds for $B_h$ sets in the random setting.  To conclude, we will also give some further remarks and state open questions in \cref{sec:remarks}.

\section{Proof of \cref{thm:aSidon_lower} and \cref{thm:aBh_lower}} \label{sec:aBh}

Ruzsa~\cite{Ruzsa-1998} and Cilleruelo~\cite{Cilleruelo-2014} both based their approach on the observation that the set of primes $\mP$ forms a multiplicative Sidon set, so that the set $\{\log p : p \in \mP\}$ is a Sidon set on the reals. Both therefore considered sets of integers whose elements are indexed by the primes and which, through the removal of few elements, can be turned into a Sidon set or more generally, in the case of Cilleruelo's approach, into a $B_h$ set. In Ruzsa's approach that removal happens through a probabilistic argument and in Cilleruelo's it is explicit when $h = 2$ and probabilistic for $h > 2$. We will limit ourselves to Cilleruelo's probabilistic argument, which applies for any $h \geq 2$. Let us finally mention that for the case of $h=2$ it would be easy to adopt the explicit construction by Cilleruelo to build a dense $\alpha$--Strong Sidon set with the counting function given by \cref{thm:aSidon_lower}.

\subsection{A generalised statement}

We will in fact prove a generalisation of \cref{thm:aBh_lower}, which in itself is already a generalisation of \cref{thm:aSidon_lower}. For a given $0 \leq \alpha < 1$ and $\gamma \geq 1$, we say that a set of integers $S \subset \NN$ is an \emph{$(\alpha,\gamma)$--strong $B_h$ set} if  
	\begin{equation} \label{eq:a-strong-bh}
		|(x_1+\dots+x_j) - (y_1+\dots+y_h)| \geq \gamma \max \{x_1^{\alpha},y_1^\alpha,\ldots,x_h^{\alpha},y_h^\alpha\}
	\end{equation}
	for any $x_1,y_1\dots,x_h,y_h \in S$ satisfying $\max \{x_1,\ldots,x_h\} \neq \max \{y_1,\ldots,y_h\}$. Extending the statement of \cref{thm:aBh_lower} to cover this notion follows without any real additional effort and will be a necessary ingredient when proving \cref{thm:random}. 
\begin{theorem} \label{thm:abBh}
	For every $h \geq 2$ , $0 \leq \alpha < 1$ and $\gamma \geq 1$ there exists an $(\alpha,\gamma)$--strong $B_h$ set $S \subset \NN$ satisfying
	\begin{equation*}
		S(n) \geq n^{\sqrt{(h-1+\alpha)^2 + 1} - (h-1+\alpha) + o(1)}.
	\end{equation*}
\end{theorem}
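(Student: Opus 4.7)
The plan is to carry out the probabilistic construction of an infinite $B_h$ set due to Cilleruelo~\cite{Cilleruelo-2014}, while keeping track of the extra slack required to enforce the $(\alpha,\gamma)$--strong condition. The starting point is the observation that $\{\log p : p \in \mP\}$ is a $B_\infty$ set over the reals; Cilleruelo's construction transfers this to an additive structure on $\ZZ$ by associating to each sufficiently large prime $p$ an integer $a_p$ of order $p^{c}$, where the exponent $c > 1$ is a parameter to be optimised. The resulting infinite set then satisfies $S(n) \sim n^{1/c}/\log n$, so that minimising $c$ maximises the density exponent $\beta = 1/c$.

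The core step is a probabilistic/first-moment argument. A violation of the $(\alpha,\gamma)$--strong condition takes the form $|\sum_i a_{p_i} - \sum_j a_{q_j}| < \gamma \max_k a_{p_k}^\alpha$, which, through the encoding $a_p \approx p^c$, pulls back to the near-equation
\begin{equation*}
    \Big| \sum_{i=1}^h p_i^c - \sum_{j=1}^h q_j^c \Big| \leq C \gamma P^{c\alpha},
\end{equation*}
where $P$ is the largest prime involved and $C = C(h)$. Cilleruelo's proof, for the pure $B_h$ case ($\alpha = 0$), shows that the expected number of bad $h$-tuples among primes in a dyadic range of size $P$ is $o(\pi(P))$ provided $c^2 - 2(h-1)c - 1 \geq 0$, i.e., $c \geq (h-1) + \sqrt{(h-1)^2+1}$, after which a deletion argument produces the required $B_h$ subset. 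Introducing the enlarged slack $\gamma P^{c\alpha}$ modifies the count of bad tuples by a factor proportional to $P^{c\alpha}$ (for each choice of the $2h-1$ free primes, the number of valid completions to a near-collision is multiplied by $P^{c\alpha}$), which through the same calculation corresponds precisely to replacing $h-1$ by $h-1+\alpha$ in the quadratic: the sharp threshold becomes $c \geq (h-1+\alpha) + \sqrt{(h-1+\alpha)^2+1}$. Taking the reciprocal yields the claimed density exponent $\beta = \sqrt{(h-1+\alpha)^2+1} - (h-1+\alpha)$; the parameter $\gamma \geq 1$ enters only through implicit constants and is absorbed into the $o(1)$.

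The main obstacle is a careful execution of the first-moment/deletion step under the enlarged slack. Concretely, one must show that the set of $h$-tuples $(p_1,\ldots,p_h,q_1,\ldots,q_h)$ with $\max\{p_i\} \neq \max\{q_j\}$ and $|\sum p_i^c - \sum q_j^c| \leq C \gamma P^{c\alpha}$ can be destroyed by deleting $o(\pi(P))$ primes from each dyadic block, so that the density $\pi(P)/\log P \sim N^\beta/\log N$ at scale $N \sim P^c$ is preserved. This requires a quantitative sharpening of Cilleruelo's equidistribution estimate (either by averaging $c$ over a short interval in the spirit of Ruzsa, or by exploiting the explicit properties of the particular family built in \cite{Cilleruelo-2014}), now with the extra factor $P^{c\alpha}$ accounted for. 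Once this is established, a block-by-block union --- with the block scales growing fast enough to decouple cross-block collisions --- produces the desired infinite $(\alpha,\gamma)$--strong $B_h$ set.
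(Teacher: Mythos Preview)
Your high-level plan is the same as the paper's: start from Cilleruelo's construction and track the extra slack from the $(\alpha,\gamma)$-strong condition through his deletion argument. But your description of the mechanism misrepresents how that construction actually works, and this matters for carrying out the modification.

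The elements $a_p$ are \emph{not} of the form $p^c$, nor well approximated by such; they are defined digit-by-digit in a generalised basis $\bar{q}=(h^2q_1',h^2q_2',\ldots)$, the $i$-th digit of $a_p$ being (a shift of) the discrete logarithm of $p$ modulo the prime $q_i'$. The randomness is over the choice of basis $\bar{q}$, not over $c$, so there is no ``averaging $c$ over a short interval'', and a near-collision does not pull back to an inequality of the shape $|\sum p_i^c - \sum q_j^c| \leq C\gamma P^{c\alpha}$. Instead, the paper analyses a near-collision structurally (its \cref{prop:sidon-property}): if $\ell$ is the highest digit at which the two sums differ and $k_1=\len(a_{p_1})$, then the gap $<\gamma a_{p_1}^\alpha$ forces $\ell^2 \lesssim \alpha k_1^2$ --- \emph{this} is where $\alpha$ enters --- while the digit equalities for $i>\ell$ force the divisibility $q_{\ell+1}'\cdots q_{k_1}' \mid \prod_{i}(p_1\cdots p_i - p_1'\cdots p_i')$ and a complementary lower bound $\ell^2 \geq (1-c)k_t^2 - c\sum_{i<t} k_i^2$. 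The first-moment bound on the number of bad primes (over the random basis) then uses only the crude divisor estimate $\tau(n)=2^{O(\log n/\log\log n)}$; no ``quantitative sharpening of Cilleruelo's equidistribution estimate'' is required. Combining the two inequalities on $\ell$ is what produces the quadratic in $c$. Your heuristic that the $\alpha$-slack simply ``multiplies the number of valid completions by $P^{c\alpha}$'' does not correspond to any step of this argument, and there is no separate ``block-by-block union with decoupling'' --- the set $A_{\bar{q},c,h}$ is already a single infinite set from which one deletes. As written, your proposal identifies the correct target exponent but attaches it to a mechanism that does not match the construction, so it is not yet a proof.
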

Note that the we immediately recover the statement of \cref{thm:aBh_lower} when setting $\gamma = 1$ and the statement of \cref{thm:aSidon_lower} when setting $\gamma = 1$ and $h = 2$.

\subsection{The construction}

Our starting point for proving \cref{thm:abBh} is the same family of infinite sets of integers $A_{\bar{q},c,h}$ constructed by Cilleruelo. For completeness and clarity of the exposition, we briefly recall its definition, see Sections~2 and~3 in~\cite{Cilleruelo-2014} for more details.

We start by fixing $0 < c < 1/2$, which roughly speaking determines both the growth and the 'Sidon-ness' of the set we are going to construct  in a negatively correlated way. We say that an ordered set of non-zero integers  $\bar{q}= (q_1,q_2,q_3,...)$ is a \emph{generalised basis}. Observe that, for a fixed $\bar{q}$, one can uniquely express any given non-negative integer $a$ in the form
\begin{equation}\label{eq:leng}
	a = x_1 + x_2 \: q_1 + x_3 \: q_1 q_2 + x_4 \: q_1 q_2 q_3 + \dots + x_k \: q_1 \cdots q_{k-1},
\end{equation}
where $0 \leq x_i < q_{i-1}$ for any $1 \leq i \leq k$, $x_k \neq 0$. 
We will refer to the numbers $x_i(a)$ (or $x_i(a,\bar{q})$ if the generalised basis $\bar{q}$ is not clear from the context) as the \emph{digits} of $a$ in base $\bar{q}$. Using the notation in~\cref{eq:leng}, we also write $\len(a) = k = k(a,\bar{q})$ for its \emph{length}. For notational convenience, we also let $x_i(a,\bar{q}) = 0$ for any $a$ when $i > \len(a)$.

The particular bases used for the construction of $B_h$ sets are of the form 
\begin{equation} \label{eq:basis}
	\bar{q} = \bar{q}(h) = (h^2 q_1',h^2 q_2',h^2 q_3',\dots),
\end{equation}
where each $q_i'$ is a prime number satisfying the condition
\begin{equation} \label{eq:qi'}
	2^{2i-1} < q_i' \leq 2^{2i+1}.
\end{equation}
Observe that by Bertrand's Postulate we can always find prime numbers satisfying condition \cref{eq:qi'} for each $i\geq 1$. Next, we will use $\mathcal{P}$ to denote the set of prime numbers. Writing  $f(c,k) = ck^2/ (\log k)^{1/2}$, we partition the set of primes into disjoint parts $\mP = \bigcup_{k \geq 3} \mP_{k,c}$, where for any $k \geq 3$
\begin{equation} \label{eq:Pkc}
	\mP_{k,c} = \left\{ p \in \mP : 2^{c(k-1)^2 - f(c,k-1)} < p \leq 2^{ck^2 - f(c,k)} \right\}.
\end{equation}
The decisive property of $f$ that will be used later is that $f(c,k) = o(k^2)$ but $f(c,k) = \omega(k^2/\log k)$. Note that, depending on the value of $c$, some of the initial parts may be empty. Finally, for any given generalised basis $\bar{q}(h) = (h^2 q_1',h^2 q_2',h^2 q_3',\dots)$ and for each $i\geq 1$,  we also fix some primitive root $g_i = g_i(q_i')$ of  $\FF_{q_i'}^{\ast}$.

We are now ready to define the set $A_{\bar{q},c,h}$. Its elements will be indexed by the set of primes, that is  $A_{\bar{q},c,h} = \{ a_p : p \in \mP \}.$ The element $a_p$ is constructed as follows: we first consider the unique subset $\mathcal{P}_{k,c}$ such that $p \in \mathcal{P}_{k,c}$. We set $\len(a_p)=k$ and let the digit $x_i(a_p)$ be given as the unique solution to the equation 
\begin{equation} \label{eq:digits_construction}
	g_i^{x_i(a_p)} \equiv p \mod q_i', \quad (h-1) \, q_i' + 1 \leq x_i(a_p) \leq h \, q_i' - 1
\end{equation}
for each $0\leq i \leq k$. As previously already noted, we set $x_i(a_p) = 0$ for any $i > k$. Also note that $a_p \neq a_{p'}$ if $p\neq p'$ by construction.

We conclude this section by stating the asymptotic growth of the set $A_{\bar{q},c,h}$. A proof of this can be found in \cite{Cilleruelo-2014} for Sidon sets. The extension for $B_h$ sets is straightforward.
\begin{proposition} \label{prop:growth}
	For any $0 < c < 1/2$ we have $A_{\bar{q},c,h}(n) = n^{c + o(1)}$.
\end{proposition}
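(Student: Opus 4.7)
The plan is to translate between primes $p$ and the corresponding elements $a_p$ by estimating $a_p$ in terms of $k = \len(a_p)$, and then to apply the prime number theorem. Note that by \cref{eq:Pkc}, the disjoint union $\bigcup_{j \leq k} \mP_{j,c}$ is precisely the set of primes up to $2^{ck^2 - f(c,k)}$.

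First, I would establish two-sided bounds on $a_p$ when $\len(a_p) = k$, i.e.\ $p \in \mP_{k,c}$. Using \cref{eq:basis} and \cref{eq:qi'}, the product $q_1 \cdots q_{k-1}$ equals $h^{2(k-1)} \, q_1' \cdots q_{k-1}'$, and the identities $\sum_{i=1}^{k-1}(2i-1) = (k-1)^2$ and $\sum_{i=1}^{k-1}(2i+1) = k^2 - 1$ yield $2^{(k-1)^2} < q_1' \cdots q_{k-1}' \leq 2^{k^2-1}$. Combining this with the digit bounds $(h-1)q_k' + 1 \leq x_k(a_p) \leq hq_k' - 1$ from \cref{eq:digits_construction} and the trivial $x_i(a_p) < q_i$ for $i < k$, one obtains $2^{k^2 - O(k)} \leq a_p \leq 2^{k^2 + O(k)}$, where the implicit constants depend only on $h$.

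Since the assignment $p \mapsto a_p$ is injective, $A_{\bar{q},c,h}(n) = \#\{p \in \mP : a_p \leq n\}$. The bounds from the previous step yield integers $k_- \leq k_+$ with $k_\pm = \sqrt{\log_2 n}\,(1 + o(1))$ such that
\[
    \bigcup_{j \leq k_-} \mP_{j,c} \; \subseteq \; \{p \in \mP : a_p \leq n\} \; \subseteq \; \bigcup_{j \leq k_+} \mP_{j,c}.
\]
By the prime number theorem, $\big|\bigcup_{j \leq k} \mP_{j,c}\big| = \pi\big(2^{ck^2 - f(c,k)}\big) = 2^{ck^2 - f(c,k) - O(\log k)}$; evaluating at $k = k_\pm$ and using $ck_\pm^2 = c \log_2 n\,(1 + o(1))$ together with $f(c, k_\pm) = o(k_\pm^2) = o(\log_2 n)$, both bounds collapse to $n^{c + o(1)}$.

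The only point requiring care is ensuring that the various error terms --- the $O(k)$ additive error in $\log_2 a_p$, the $O(\log k)$ factor from PNT, and $f(c,k)$ itself --- all get absorbed into the $n^{o(1)}$ factor. The condition $f(c,k) = \omega(k^2/\log k)$ noted just after \cref{eq:Pkc} is precisely engineered for this balance, so no step presents a substantive obstacle beyond the bookkeeping; the argument is essentially the same as Cilleruelo's density computation in~\cite{Cilleruelo-2014}, with the parameter $h$ entering only through $h^{O(k)} = 2^{O(k)}$ multiplicative corrections that are negligible on the exponential scale.
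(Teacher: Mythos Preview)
Your argument is correct and is exactly the density computation one carries out in this setting; the paper itself does not give a proof but simply refers to Cilleruelo~\cite{Cilleruelo-2014}, whose argument is the one you have reproduced (with the factor $h^{O(k)}$ absorbed as you note). One small correction to your closing remark: the property of $f$ you actually use to absorb $f(c,k_\pm)$ into the $n^{o(1)}$ term is $f(c,k)=o(k^2)$, not $f(c,k)=\omega(k^2/\log k)$; the latter is a \emph{lower} bound on $f$ and is what is needed later in the removal step, not here.
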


\subsection{Some auxiliary statements}

Three important properties follow immediately from the definition of the sets $A_{\bar{q},c,h}$. The first one is a direct consequence of the construction of its elements.

\begin{remark} \label{rmk:len}
	The length of any element $a_p$ is determined by the part its indexing prime falls in, that is $\len (a_p) = k$ if and only if $p \in \mP_{k,c}$.
\end{remark}

The second important observation is that, due to the second condition in \cref{eq:digits_construction} and the constant $h^2$ in the construction of the generalised basis, one can sum any $h$ numbers in $A_{\bar{q},c,h}$, without having to carry digits.

\begin{remark} \label{rmk:add_digits}
	We can add up any $h$ elements from $A_{\bar{q},c,h}$ without having to carry digits, that is
	\begin{equation*} \label{eq:digits_equal}
		x_i( a_{p_1}+\ldots+a_{p_{t}}) = x_i(p_1) + \ldots + x_i(p_{t})
	\end{equation*}
	for any $1 \leq t \leq h$, $p_1,\ldots,p_{t} \in \mP$ and $i \geq 1$ and therefore also
	\begin{equation*} \label{eq:len_equal}
		\len (a_{p_1} + \ldots + a_{p_{t}}) = \max \{ \len (a_{p_1}), \ldots, \len (a_{p_{t}})\}.
	\end{equation*}
\end{remark}

\begin{remark} \label{rmk:distinguish_digits}
	Let $a_{p_1}, \ldots, a_{p_{t}} \in A_{\bar{q},c,h}$ for some $1 \leq t \leq h$. For any $i \geq 1$, one can distinguish the number of non-zero $i$-th digits of the summands in $a_{p_1}+\ldots+a_{p_t}$ simply by considering the $i$-th digit $x_i( a_{p_1}+\ldots+a_{p_{t}} )$. To see this, write $m_i = \big| \{1 \leq j \leq t : x_i(p_j) \neq 0 \} \big|$ and note again that, by \cref{eq:digits_construction} we have 
	\begin{equation*}
		m_i (h-1) \, q_i' + m_i \leq x_i( a_{p_1}+\ldots+a_{p_t} ) \leq m_i h \, q_i' - m_i.
	\end{equation*}
	Clearly we also have $m_i h \, q_i' - m_i < (m_i+1) (h-1) \, q_i' + (m_i+1)$ and hence the possible different intervals are disjoint. It follows that $x_i( a_{p_1}+\ldots+a_{p_t} )$ uniquely determines  the value of $m_i$.
\end{remark}

Let us show some additional auxiliary results regarding the bases $\bar{q}$ and the sets $A_{\bar{q},c,h}$ before proving the bound in \cref{thm:abBh}. For all of these statements, let $h \geq 2$ be fixed and $\bar{q}$ some arbitrary basis satisfying \cref{eq:basis} and \cref{eq:qi'}.

\begin{lemma} \label{lemma:bound_from_length}
	For any $a \in \NN$ with $k = \len(a)$ we have
	\begin{equation*}
		h^{2k-2}\, 2^{k^2-2k+1} < a < h^{2k} \, 2^{k^2+2k}.
	\end{equation*}
\end{lemma}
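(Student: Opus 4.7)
The plan is to extract from the generalised base expansion \cref{eq:leng} an elementary two-sided bound for $a$ in terms of the partial products of the $q_i$'s, and then substitute the size bounds on $q_i = h^2 q_i'$ coming from \cref{eq:basis} and \cref{eq:qi'}.

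First I would observe that, since $\len(a) = k$, the top digit $x_k$ is non-zero, so
\begin{equation*}
  a \;\geq\; x_k \, q_1 q_2 \cdots q_{k-1} \;\geq\; q_1 q_2 \cdots q_{k-1}.
\end{equation*}
For the upper bound, using $x_i \leq q_i - 1$ for each $1 \leq i \leq k$, the telescoping identity
\begin{equation*}
  \sum_{i=1}^{k}(q_i-1)\, q_1 \cdots q_{i-1} \;=\; q_1 q_2 \cdots q_k - 1
\end{equation*}
(with the empty product $q_1 \cdots q_0 = 1$) yields $a < q_1 q_2 \cdots q_k$. Thus it suffices to sandwich the products $q_1 \cdots q_{k-1}$ and $q_1 \cdots q_k$.

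Now from \cref{eq:basis} and \cref{eq:qi'} we have $h^2 \cdot 2^{2i-1} < q_i \leq h^2 \cdot 2^{2i+1}$ for every $i \geq 1$. Multiplying these inequalities and using the arithmetic sums $\sum_{i=1}^{k-1}(2i-1) = (k-1)^2 = k^2-2k+1$ and $\sum_{i=1}^{k}(2i+1) = k^2+2k$ we obtain
\begin{equation*}
  q_1 \cdots q_{k-1} \;>\; h^{2k-2}\, 2^{k^2-2k+1}
  \qquad \text{and} \qquad
  q_1 \cdots q_k \;\leq\; h^{2k}\, 2^{k^2+2k},
\end{equation*}
which combined with the two bounds on $a$ give exactly the claimed inequalities. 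There is no real obstacle here: the argument is a direct telescoping together with a routine arithmetic-series computation, and the only thing to be careful about is matching the strict/non-strict inequalities coming from \cref{eq:qi'} with those in the statement.
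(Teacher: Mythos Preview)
Your argument is correct and follows essentially the same route as the paper: both derive the sandwich $q_1\cdots q_{k-1}\leq a< q_1\cdots q_k$ from the mixed-radix expansion and then plug in the bounds $h^2\,2^{2i-1}<q_i\leq h^2\,2^{2i+1}$ together with the arithmetic sums $\sum_{i=1}^{k-1}(2i-1)=k^2-2k+1$ and $\sum_{i=1}^{k}(2i+1)=k^2+2k$. The only difference is that you spell out the telescoping identity explicitly where the paper simply says ``by nature of the generalised basis''.
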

\begin{proof} 
	By nature of the generalised basis, we have 
	\begin{equation*}
		h^2q_1' \cdots h^2q_{k-1}' \leq a < h^2q_1' \cdots h^2q_{k}'
	\end{equation*}
	and therefore by~\cref{eq:qi'} we have 
	\begin{equation*}
		a < h^{2k} \, \prod_{i=1}^{k} 2^{2i+1} = h^{2k} \, 2^{k^2+2k}
	\end{equation*}
	as well as
	\begin{equation*}
		a > h^{2k-2} \, \prod_{i=1}^{k-1} 2^{2i-1} = h^{2k-2}\, 2^{k^2-2k+1},
	\end{equation*}
	proving the statement.
\end{proof}

\begin{lemma} \label{lemma:len_alpha}
	For any $\gamma \geq 1$, $0 \leq \alpha < 1$ and $a \in \NN$ with $k = \len(a)$ we have 
	\begin{equation*}
		\len \big( \lfloor \gamma a^{\alpha} \rfloor \big) \leq \big( \alpha k^2 + (\log_2 h + 1) \, 2\alpha k + \log_2 \gamma \big)^{1/2}.
	\end{equation*}
\end{lemma}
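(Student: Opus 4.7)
The plan is to apply \cref{lemma:bound_from_length} in both directions: once as an upper bound on $a$ in terms of $k = \len(a)$, and once as a lower bound on the integer $b := \lfloor \gamma a^{\alpha} \rfloor$ in terms of $k' := \len(b)$, which is the quantity we want to control. Chaining these two bounds will produce a quadratic inequality in $k'$ from which the desired estimate follows after rearrangement.

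More concretely, \cref{lemma:bound_from_length} gives $a < h^{2k} \cdot 2^{k^2 + 2k}$, and since raising to the $\alpha$-th power is monotone for $a \geq 1$, I would deduce
\[
 b \,\leq\, \gamma\, a^{\alpha} \,<\, \gamma \cdot h^{2\alpha k} \cdot 2^{\alpha k^2 + 2\alpha k}.
\]
On the other hand, the same lemma applied to $b$ yields $b > h^{2k' - 2} \cdot 2^{(k'-1)^2}$. Combining the two inequalities and taking $\log_2$, I obtain
\[
 (k'-1)^2 + 2(k'-1)\log_2 h \;<\; \alpha k^2 + 2\alpha k (\log_2 h + 1) + \log_2 \gamma.
\]
Using the identity $(k')^2 = (k'-1)^2 + 2(k'-1) + 1$ together with $\log_2 h \geq 1$ (which holds because $h \geq 2$), the surplus $2(k'-1)$ on the left is dominated by $2(k'-1)\log_2 h$, and solving for $(k')^2$ produces the claimed bound.

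The only technical point worth watching is the bookkeeping of the $\pm 1$ shifts introduced by both the floor function and the $(k-1)$ versus $k$ discrepancy in \cref{lemma:bound_from_length}; these create a constant-sized error that must be absorbed into the $2(k'-1)\log_2 h$ term, and this is precisely where the assumption $h \geq 2$ is needed. No additional ingredients beyond \cref{lemma:bound_from_length} are required, so the proof will be a short direct computation.
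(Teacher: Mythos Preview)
Your proposal is correct and follows essentially the same approach as the paper: bound $\gamma a^{\alpha}$ from above via the upper bound in \cref{lemma:bound_from_length}, then invoke the lower bound of the same lemma for $b=\lfloor \gamma a^{\alpha}\rfloor$ and compare. The paper's proof is in fact just the two-line version of your sketch, leaving the final algebraic rearrangement (including the $\pm 1$ bookkeeping you correctly flag) to the reader.
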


\begin{proof} 
	We have
	\begin{equation*}
		\gamma a^{\alpha} < \gamma \big( h^2q_1' \cdots h^2q_k' \big)^\alpha \leq \gamma \, h^{\alpha2k} \, 2^{\alpha(k^2+2k)}	
	\end{equation*}
	so that the statement follows by the lower bound in \cref{lemma:bound_from_length}.
\end{proof}

	%
	%
	%
	%

The next statement is reminiscent to Proposition 3 in \cite{Cilleruelo-2014}.

\begin{proposition} \label{prop:sidon-property}
	Let $\gamma \geq 1$, $0 \leq \alpha < 1$ and $0 < c < 1/2$. Assume that there are elements $a_{p_1}, a_{p_1'}, \ldots, a_{p_h},a_{p_h'} \in A_{\bar{q},c,h}$ satisfying
	\begin{enumerate} \setlength\itemsep{0em}
		\item[(1)]	$a_{p_1} \geq \ldots \geq a_{p_h}$, $a_{p_1'} \geq \ldots \geq a_{p_h'}$ and $a_{p_1} > a_{p_1'}$ as well as
		\item[(2)]	$\big| (a_{p_1} + \dots +  a_{p_h}) - (a_{p_1'} + \dots + a_{p_h'}) \big| < \gamma \, a_{p_1}^{\alpha}$.
	\end{enumerate}
	We write $k_i = \len(a_{p_i})$ and $k_i' = \len(a_{p_i'})$ for $1 \leq i \leq h$ as well as
	\begin{equation*}
		\ell = \max \big\{ 1 \leq i \leq k_1 : x_i(p_1) + \ldots + x_i(p_h) \neq x_i(p_1') + \ldots + x_i(p_h') \big\}.
	\end{equation*}
	Then, there exists some $1 \leq t \leq h$ such that 
	\begin{enumerate} \setlength\itemsep{0em}
		\item[(i)] 	$k_i = k_i' \geq \ell$ for all $1 \leq i \leq t$,
		\item[(ii)] $\ell^2 \leq \alpha k_1^2 + (\log_2 h + 1) \, 2 k_1 + \log_2 \gamma + 1$,
		\item[(iii)] $\ell^2 \geq (1-c)k_t^2 - c(k_1^2 + \dots + k_{t-1}^2)$ and 
		\item[(iv)] $q_{\ell+1}' \cdots q_{k_1}' \mid \prod_{i=1}^t (p_1 \cdots p_i - p_1' \cdots p_i')$.
	\end{enumerate}
\end{proposition}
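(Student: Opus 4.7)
The plan is to follow the structure of Proposition 3 in Cilleruelo's paper~\cite{Cilleruelo-2014}, adapting the analysis to accommodate the slack $\gamma a_{p_1}^\alpha$ on the right-hand side. By \cref{rmk:add_digits}, the sums $A := a_{p_1} + \ldots + a_{p_h}$ and $A' := a_{p_1'} + \ldots + a_{p_h'}$ have digit expansions $X_i := \sum_j x_i(p_j)$ and $X_i' := \sum_j x_i(p_j')$ in base $\bar q$, which agree for every $i > \ell$ by the definition of $\ell$. Setting $Q_i := q_1 \cdots q_{i-1}$, the starting point is the decomposition $A - A' = \sum_{i \leq \ell}(X_i - X_i')Q_i$.

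I would take $t$ to be the largest index in $\{1, \ldots, h\}$ such that $k_i = k_i' \geq \ell$ for every $i \leq t$, so that (i) holds by construction. Establishing $t \geq 1$ reduces to showing $k_1 = k_1'$, since $\ell \leq k_1$ is automatic: if one had $k_1 > k_1'$, then $X_{k_1}$ would be non-zero while $X_{k_1}' = 0$, and the telescoping bound $|\sum_{i < k_1}(X_i - X_i')Q_i| \leq Q_{k_1} - 1$ would force $|A - A'|$ to be at least of order $q_{k_1}' Q_{k_1}$, contradicting $|A - A'| < \gamma a_{p_1}^\alpha$ via \cref{lemma:bound_from_length} for $k_1$ sufficiently large.

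For (ii), the idea is to decompose $A - A' = (X_\ell - X_\ell')Q_\ell + R$ with $|R| \leq Q_\ell - 1$. When $|X_\ell - X_\ell'| \geq 2$ this directly gives $|A - A'| \geq Q_\ell + 1$, and combining with $|A - A'| < \gamma a_{p_1}^\alpha$ and \cref{lemma:len_alpha} applied to $a_{p_1}$ yields the required bound on $\ell^2$. The delicate sub-case $|X_\ell - X_\ell'| = 1$ only arises when $m_\ell = m_\ell'$ (by \cref{rmk:distinguish_digits}) and requires a refined estimate using the next-highest disagreement, with the ``$+1$'' in the statement of (ii) absorbing the resulting deficit.

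Finally, for (iv) the equality $X_i = X_i'$ combined with $m_i = m_i'$ and \cref{eq:digits_construction} yields the multiplicative congruence $\prod_{j : k_j \geq i} p_j \equiv \prod_{j : k_j' \geq i} p_j' \pmod{q_i'}$ for each $i \in (\ell, k_1]$. By (i), both products re-index as $p_1 \cdots p_{s_i}$ and $p_1' \cdots p_{s_i}'$ for some $s_i \leq t$, so $q_i'$ divides the $s_i$-th factor of $\prod_{j = 1}^t(p_1 \cdots p_j - p_1' \cdots p_j')$; collecting these across $i \in (\ell, k_1]$ gives (iv). Property (iii) is then obtained by a refined size comparison: restricting to the indices $i$ with $s_i = t$ (which cover approximately $i \in (\ell, k_t]$), one gets $\prod_{i = \ell+1}^{k_t} q_i' \geq 2^{k_t^2 - \ell^2}$ dividing the single factor $p_1 \cdots p_t - p_1' \cdots p_t'$, which is bounded above by $2^{c(k_1^2 + \ldots + k_t^2) + 1}$ by \cref{eq:Pkc}; rearranging yields the claimed bound. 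The main obstacles will be the careful case analysis for (ii) when $|X_\ell - X_\ell'| = 1$, and ensuring non-vanishing of the relevant factor in (iv) so that the size comparison in (iii) goes through.
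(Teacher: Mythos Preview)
Your plan mirrors the paper's proof. For (iii) and (iv) you are doing exactly what the paper does: from $X_i=X_i'$ for $i>\ell$ obtain $p_1\cdots p_{m_i}\equiv p_1'\cdots p_{m_i}'\pmod{q_i'}$ for each such $i$, group the $q_i'$ according to which factor of $\prod_{j\le t}(p_1\cdots p_j-p_1'\cdots p_j')$ they divide, and compare sizes over $i\in(\ell,k_t]$ to get (iii). For (i) the paper streamlines your argument: rather than defining $t$ so that (i) holds tautologically and then separately checking $k_1=k_1'$ via a size estimate, it observes via \cref{rmk:distinguish_digits} that $m_i=m_i'$ for all $i>\ell$, which (since both sequences $(k_j)$ and $(k_j')$ are sorted) forces $k_j=k_j'$ whenever $\max\{k_j,k_j'\}>\ell$, and simply sets $t$ to be the largest such $j$. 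This avoids the ``for $k_1$ sufficiently large'' caveat that your route introduces.

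The only substantive difference is (ii). The paper does not split on $|X_\ell-X_\ell'|$; it directly asserts $\ell-1\le\len(|A-A'|)$ and then invokes \cref{lemma:len_alpha}. Your concern about the case $|X_\ell-X_\ell'|=1$ is legitimate, but it dissolves once one uses the sharper bound $|X_j-X_j'|\le q_j-h$ (coming from the fact that every nonzero digit lies in $[(h-1)q_j'+1,\,hq_j'-1]$, so $0\le X_j\le q_j-h$) instead of the crude $|X_j-X_j'|\le q_j-1$. With this one gets
\[
|R|\le \sum_{j<\ell}(q_j-h)Q_j=(Q_\ell-1)-(h-1)\sum_{j<\ell}Q_j,
\]
hence $|A-A'|\ge Q_\ell-|R|\ge 1+(h-1)\sum_{j<\ell}Q_j\ge Q_{\ell-1}$, which is exactly $\len(|A-A'|)\ge\ell-1$ without any case analysis. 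So your approach is correct, just slightly more laborious in (ii) and slightly less clean in (i) than the paper's.
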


\begin{proof} 
	By \cref{rmk:distinguish_digits}, we must have $k_i = k_i'$ if $\ell < \max \{k_i,k_i'\}$ for any $1 \leq i \leq h$. Part (i) therefore immediately follows by setting
	\begin{equation*}
		t = \max \big\{1 \leq i \leq h : \ell < \max \{k_i,k_i'\} \big\}.
	\end{equation*}
	To see that part (ii) holds, we note that by definition of $\ell$ we have 
	\begin{equation*}
		\ell - 1 \leq \len \big( | (a_{p_1} + \dots +  a_{p_t}) - (a_{p_1'} + \dots + a_{p_t'}) | \big)
	\end{equation*}
	and therefore by assumption of the proposition, by the fact that $\len(n)$ is an increasing function in $n$ and by \cref{lemma:len_alpha} we have
	\begin{equation*} \label{eq:kt1_bound}
		\ell \leq \len\big( \lfloor \gamma a_{p_1}^{\alpha} \rfloor \big) + 1 \leq (\alpha k_1^2 + (\log_2 h + 1)2\alpha k_1 + \log_2 \gamma)^{1/2} + 1.
	\end{equation*}
	In order to verify parts (iii) and (iv), we note that by choice of $\ell$ we have 
	\begin{equation*}
		g_i^{x_i(p_1) + \ldots + x_i(p_t)} \equiv g_i^{x_i(p_1') + \ldots + x_i(p_t')}  \mod q_i'
	\end{equation*}
	for any $\ell < i \leq k_1$. By the construction of the digits of the elements in our set and by the previous observation, we therefore get
	\begin{equation*}
		p_1 \cdots p_{k_j} \equiv p_1' \cdots p_{k_j}'  \mod q_i'
	\end{equation*}
	for any $1 \leq j \leq t$ and $k_{j+1} + 1 \leq i \leq k_j$ where we let $k_{t+1} = \ell$. By the fact that the $q_i'$ are distinct primes, it follows that
	\begin{equation*}
		p_1 \cdots p_{k_j} \equiv p_1' \cdots p_{k_j}' \mod q_{\max \{\ell, k_{j+1} \}+1}' \cdots q_{k_j}'
	\end{equation*}
	for any $1 \leq j \leq t$, which implies part (iv) when $j = t$. By \cref{eq:qi'} and \cref{eq:Pkc} it follows that
	\begin{equation*}
		2^{c(k_1^2 + \ldots + k_j^2)} \geq |p_1 \cdots p_{k_j} - p_1' \cdots p_{k_j}'| \geq q_{\ell+1}' \cdots q_{k_j}' > 2^{k_j^2 - \ell^2},
	\end{equation*}
	for any $1 \leq j \leq t$, which proves part (iii) for $j = t$.
\end{proof}

\subsection{Proof of the bound in \cref{thm:abBh}}

Choosing
\begin{equation} \label{eq:fix_c}
	c = \sqrt{(h-1+\alpha/2)^2 + 1-\alpha} - (h-1+\alpha/2),
\end{equation}
the set $A_{\bar{q},c,h}$ satisfies the growth stated in \cref{thm:aBh_lower} by \cref{prop:growth}. However, it is unfortunately not guaranteed to be an $(\alpha,\gamma)$--strong $B_h$ sequence. The plan is to therefore remove $a_{p_1}$ for every $a_{p_1},a_{p_1'}, \dots, a_{p_h},a_{p_h'} \in A_{\bar{q},c,h}$ violating condition \cref{eq:a-strong-bh}. This removal clearly turns the initial set into an $\alpha$--strong $B_h$. Using \cref{prop:sidon-property}, we will show that this alteration does not impact the growth of the infinite set. This will in fact follow through a probabilistic argument by arguing that this statement is true for almost all choices of the basis $\bar{q}$.
	
Following the notation of Cilleruelo, let $\mB_k(\bar{q})$ denote the set of all prime numbers $p_1 \in \mP_{k,c}$ for which there exist $a_{p_1'}, a_{p_2},a_{p_2}', \dots, a_{p_t},a_{p_t'} \in A_{\bar{q},c,h}$ for some $1 \leq t \leq h$ such that $\{a_{p_1} \geq \dots \geq a_{p_t} \} \cap \{ a_{p_1'} \geq \dots \geq a_{p_t'} \} = \emptyset$, $a_{p_1} > a_{p_1'}$ and 
\begin{equation} \label{eq:a-strong-bh-t}
	\big| (a_{p_1} + \dots +  a_{p_t}) - (a_{p_1'} + \dots + a_{p_t'}) \big| < \gamma \, a_{p_1}^{\alpha}	
\end{equation}
To see that for
\begin{equation*}
	\mP^* = \bigcup_{k \geq 3} \big( \mP_{k,c} \setminus \mB_k(\bar{q}) \big)
\end{equation*}
the set $S = \{ a_p : p \in \mP^* \}$ is an $\alpha$--strong $B_h$ set, we note that by \cref{prop:sidon-property} for any $a_{p_1},a_{p_1'}, \dots, a_{p_h},a_{p_h'} \in A_{\bar{q},c,h}$ violating condition \cref{eq:a-strong-bh} there exists some $t$ such that (using the notation of that proposition) $\ell > \max \{k_{t+1},k_{t+1}'\}$ if $t \neq h$. By definition of $\ell$, it follows that $a_{p_1'}, a_{p_2},a_{p_2}', \dots, a_{p_t},a_{p_t'}$ satisfy condition \cref{eq:a-strong-bh-t}. We can furthermore ensure that $\{a_{p_1}, \dots, a_{p_t} \} \cap \{ a_{p_1'}, \dots, a_{p_t'} \} = \emptyset$ by removing identical elements from both sides if necessary. It follows that removing all $p_1$ coming from some set $\mB_k(\bar{q})$ destroys all $a_{p_1},a_{p_1'}, \dots, a_{p_h},a_{p_h'} \in A_{\bar{q},c,h}$ violating condition \cref{eq:a-strong-bh}.

If we can show that $\big| \mB_k(\bar{q}) \big| = o (| \mP_k |)$, then $S(n) = n^{c + o(1)}$ follows from \cref{prop:growth} as desired, proving the statement. The dependence of the set $\mB_k(\bar{q})$ on the choice of basis $\bar{q}$ is emphasised as we will argue over the probability space of all bases $\bar{q} = (h^2 q_1', h^2 q_2', \ldots)$ where each $q_i'$ is chosen uniformly at random (and independently) among all prime numbers satisfying \cref{eq:qi'}. In this probability distribution, we have 
\begin{equation*}
	\PP	\big( h^2 q_{\ell+1}',\dots, h^2 q_{k_1}' \in \bar{q} \big) = \prod_{i = \ell+1}^{k_1} \frac{1}{\pi(2^{2i+1}) - \pi(2^{2i-1})} \leq 2^{\ell^2 - k_1^2 + O(k_1 \log k_1)},
\end{equation*}
for any $q_{\ell+1}',q_{\ell+2}',\dots,q_{k_1-1}',q_{k_1}'$ satisfying \cref{eq:qi'} and for some $0 \leq \ell \leq k_1$. Now let
\begin{equation*}
	\mathcal{K}_{k} = \bigcup_{t=1}^h \big\{ (k_1,\dots,k_t,\ell) : 0 \leq \ell \leq k_t \leq \ldots \leq k_1 = k \text{ satisfying (ii) and (iii)} \big\}
\end{equation*}
for any $k \geq 3$ and note that clearly $|\mathcal{K}_{k}| \leq h(k+1)^h = 2^{O(\log k)}$. Given any $\mathbf{k} = (k_1,\dots,\ell) \in \mathcal{K}_k$, we furthermore let
\begin{equation*}
	\mathcal{P}_{\mathbf{k}} = \{ (p_1,p_1',\dots,p_t,p_t',\ell) : p_i,p_i' \in \mP_{k_i,c} \text{ for all $1 \leq i \leq t$} \} 
\end{equation*}
and note that
\begin{equation*}
	|\mathcal{P}_{\mathbf{k}}| \leq |\mP_{k_1,c}|^2 \cdots |\mP_{k_t,c}|^2 \leq 2^{2c(k_1^2 + \ldots + k_t^2) - 2f(c,k_1)}.
\end{equation*}
Lastly, for any $\mathbf{p} = (p_1,p_1',\dots,p_t,p_t',\ell) \in \mathcal{P}_{\mathbf{k}}$ we let
\begin{equation*}
	\mathcal{Q}_{\mathbf{p}} = \{ (q_{\ell+1}',\dots,q_{k_t}') : \text{each $q_i'$ satisfies \cref{eq:qi'} and $q_{\ell+1}' \cdots q_{k_1}'$ satisfies (iv)} \}
\end{equation*}
and note that
\begin{equation*}
	|\mathcal{Q}_{\mathbf{p}}| \leq \tau \left( \prod_{i=1}^t (p_1 \cdots p_i - p_1' \cdots p_i') \right) \leq \tau \big( 2^{O(k_1^2)} \big)  = 2^{O(k_1^2 / \log k_1)}
\end{equation*}
where $\tau$ is divisor function and we have used the fact that it satisfies $\tau(n) = 2^{O(\log n /\log \log n)}$. Combining the previous bounds and bounding $k_t^2$ through (iii) in \cref{prop:sidon-property}, $\ell^2$ through (ii) in \cref{prop:sidon-property} and using $k_i^2 \leq k_1^2$ for $1 < i < t$, we can now conclude that
\begin{align*}
	\EE \big( |\mB_k(\bar{q})| \big) & \leq  \sum_{\mathbf{k} \in \mathcal{K}_{k}} \sum_{\mathbf{p} \in \mathcal{P}_{\mathbf{k}}} \sum_{\mathbf{q} \in \mathcal{Q}_{\mathbf{p}}} \PP	\big( h^2 q_{\ell+1}', \dots, h^2 q_{k_1}' \in \bar{q} \big) \\
	& = \sum_{\mathbf{k} \in \mathcal{K}_{k}}  2^{2c(k_1^2 + \ldots + k_t^2) + \ell^2 - k_1^2 - 2f(c,k_1) + O(k_1^2 / \log k_1)} \\
	& \leq \sum_{\mathbf{k} \in \mathcal{K}_{k}}  2^{\left(\frac{2c(t-1)+(1+c)\alpha}{1-c}-1\right)k_1^2 - 2f(c,k_1) + O(k_1^2 / \log k_1)} = 2^{ck^2 - 2f(c,k_1) + O(k_1^2 / \log k_1)}
\end{align*}
where in the last \emph{in}equality we have used property (ii) and (iii) and in the last equality we have used that $(2c(t-1)+(1+c)\alpha)/(1-c)-1 \leq c$ if $c$ satisfies \cref{eq:fix_c}, with equality when $t = h$. Since $|\mP_{k,c}| = (1+o(1)) \, 2^{ck^2 - f(c,k)}$, it follows that
\begin{equation*}
	\mathbb{E}\left( \sum_{k \geq 3} |\mB_k(\bar{q})|/|\mP_{k,c}| \right) \leq \sum_{k \geq 3} 2^{- f(c,k_1) + O(k_1^2 / \log k_1)} = \sum_{k \geq 3} 2^{- ck_1^2/ (\log k_1)^{1/2} \, (1+o(1))}
\end{equation*}
which is convergent. Hence, due to Borel-Cantelli we have $|\mB_k(\bar{q})| = o(|\mP_{k,c}|)$ for almost all $\bar{q}$. This proves the statement. \hfill $\qed$

\section{Proof of \cref{thm:aBh_upper}} \label{sec:aBh_upper}

Before proving the upper bound stated in \cref{thm:aBh_upper}, we will first establish a corresponding result for finite $\alpha$--strong $B_h$ sets. Later, we can then make the step to the infinite setting.

Let $n \geq 1$ and $h \geq 2$ be integers and $0 \leq \alpha < 1$. Extending a definition of Kohayakawa et al.~\cite{KohayakawaLeeMoreiraRodl-2018,KohayakawaLeeMoreiraRodl-2019}, we say that a set $S \subset [n]$ is an \emph{$n$--finite $\alpha$--strong $B_h$ set} if
	\begin{equation*} \label{eq:n-finite-a-strong-bh}
		|(x_1+\dots+x_h) - (y_1+\dots+y_h)| \geq n^{\alpha}
	\end{equation*}
	for any $x_1,y_1\dots,x_h,y_h \in S$ satisfying $\{x_1,\ldots,x_h\} \neq \{y_1,\ldots,y_h\}$. For $\alpha = 0$ this notion again coincides with that of a classic $B_h$ set. 

\begin{proposition} \label{prop:n-finite-a-strong-Bh-upper}
	Let $n \geq 1$, $h \geq 2$ and $0 \leq \alpha < 1$. Any $n$--finite $\alpha$--strong $B_h$ set $S \subset [n]$ satisfies
	\begin{equation*}
		|S| \leq 2h^{1+1/h} \, n^{(1-\alpha)/h}.
	\end{equation*}
\end{proposition}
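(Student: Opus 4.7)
The plan is to adapt the classical counting argument that bounds the size of a finite $B_h$ set. In the standard approach one counts the multisets of size $h$ drawn from $S$, of which there are $\binom{|S|+h-1}{h}$, and observes that each such multiset produces an $h$-fold sum lying in the range $[h, hn]$. In the ordinary $B_h$ case (i.e.\ $\alpha = 0$), the distinctness of these sums combined with the trivial inequality $\binom{|S|+h-1}{h} \geq |S|^h/h!$ immediately gives $|S| = O(n^{1/h})$.

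First I would make the analogous observation in the $\alpha$-strong setting: the defining inequality forces any two sums arising from distinct multisets of size $h$ to differ by at least $n^{\alpha}$. Consequently the number of realisable sums within an interval of length less than $hn$ is at most $hn/n^{\alpha} + 1 \leq 2h\, n^{1-\alpha}$, using $h \geq 2$ and $n^{1-\alpha} \geq 1$. Combining this bound with $\binom{|S|+h-1}{h} \geq |S|^h/h!$ yields
\begin{equation*}
	\frac{|S|^h}{h!} \leq 2h\, n^{1-\alpha}.
\end{equation*}

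Finally I would take $h$-th roots and use $(h!)^{1/h} \leq h$ together with $2^{1/h} \leq 2$ to simplify the constant to $2 h^{1+1/h}$, exactly matching the claimed bound. I do not anticipate any real obstacle: the argument is a direct tightening of the classical proof in which the effective range of admissible $h$-fold sums is shrunk by the factor $n^{-\alpha}$. The only point requiring a little care is to check that the condition $\{x_1,\ldots,x_h\} \neq \{y_1,\ldots,y_h\}$ in the definition of an $n$--finite $\alpha$--strong $B_h$ set is exactly the condition needed to make the multiset-to-sum map strictly separate on all $\binom{|S|+h-1}{h}$ multisets, which is immediate from the definition.
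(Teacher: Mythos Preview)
Your proposal is correct and follows essentially the same counting argument as the paper. The only cosmetic differences are that the paper counts sums of $h$ \emph{distinct} elements (using the bound $\binom{|S|}{h} \geq (|S|/h)^h$ instead of your multiset count $\binom{|S|+h-1}{h} \geq |S|^h/h!$) and phrases the $n^\alpha$--separation via disjoint intervals of length $n^\alpha$ around each sum rather than a direct spacing estimate, but the underlying idea and the resulting constants are the same.
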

\begin{proof}
	Let $A_0 = \{ a_1 + \ldots + a_h : a_1 < \ldots < a_h \in S\}$ denote the set of sums of $h$ distinct elements in $S$ and $A_1 = \bigcup_{s \in S_0} [s - \lfloor n^{\alpha}/2 \rfloor,s + \lceil n^{\alpha}/2 \rceil )$ the union of all intervals of size $n^{\alpha}$ around those elements. We have $A_0 \subset [h,hn]$ and therefore $A_1 \subset [h - \lfloor n^{\alpha}/2 \rfloor,hn + \lceil n^{\alpha}/2 \rceil)$, so that 
	\begin{equation} \label{eq:S1_lower}
		|A_1| \leq hn + \lceil n^{\alpha}/2 \rceil - (h- \lfloor n^{\alpha}/2 \rfloor ) +1\leq h(n+1) + n^{\alpha}+1.
	\end{equation}
	Since $S$ is an $n$--finite $\alpha$--strong $B_h$ set, we also know that the intervals of the form $[s - \lfloor n^{\alpha}/2 \rfloor, s + \lceil n^{\alpha}/2 \rceil)$ are distinct, so that
	\begin{equation} \label{eq:S1_upper}
		|A_1| = \binom{|S|}{h} \, n^{\alpha} \geq  n^{\alpha} \frac{|S|^h}{ h^h}.
	\end{equation}
	Combining \cref{eq:S1_lower} and \cref{eq:S1_upper}, it follows that
	\begin{equation*}
		|S|^h \leq h^{h+1} \, n^{1-\alpha} + h^{h+1} n^{-\alpha} + h^h \leq 3h^{h+1} \, n^{1-\alpha}
	\end{equation*}
	as desired.
\end{proof}

We are now ready to transfer this bound to the infinite setting.

\begin{proof}[Proof of \cref{thm:aBh_upper}]

	We partition the set $S$ into parts $S_i = S \cap (2^i,2^{i+1}]$ for all $i \geq 0$ and note that each $\{s-2^i : s \in S_i\} \subset [2^i]$ is a $2^i$--finite $\alpha$--strong $B_h$ set. Using \cref{prop:n-finite-a-strong-Bh-upper}, it follows that 
	\begin{align*}
		S(n) & \leq \sum_{i = 0}^{\lceil \log_2(n) \rceil} |S_i| \leq \sum_{i = 0}^{\lceil \log_2(n) \rceil} 2h^{1+1/h} \, 2^{i(1-\alpha)/h} \\
		& = 2h^{1+1/h} \, \frac{2^{(\lceil \log_2(n) \rceil + 1)(1-\alpha)/h} - 1}{2^{(1-\alpha)/h} - 1} \leq \frac{4h^{1+1/h}}{2^{(1-\alpha)/h} - 1} \, n^{(1-\alpha)/h},
	\end{align*}
	so that the statement of \cref{thm:aBh_upper} holds with $c = c(\alpha,h) = 4h^{1+1/h} /(2^{(1-\alpha)/h} - 1)$.
\end{proof}

%

\section{Proof of \cref{thm:random}} \label{sec:random}

\cref{thm:random} follows immediately from \cref{thm:abBh} when using the following generalisation of a result of Kohayakawa et al. (specifically Theorem 12 in~\cite{KohayakawaLeeMoreiraRodl-2019}). It states that one can use $(1- \delta,2h 2^{1 + 1/\delta})$--strong $B_h$ sets to obtain dense $B_h$ sets in $R_{\delta}$.

\begin{theorem} \label{thm:transfer}
	Let $0 < \delta \leq 1$ and $h \geq 2$. If there exists an $(1- \delta,2h 2^{1 + 1/\delta})$--strong $B_h$ set $S \subset \NN$ satisfying
	\begin{equation*}
		S(n) \geq n^{u(\delta) + o(1)}
	\end{equation*}
	then, with probability $1$, the random subset $R_\delta$ of $\NN$ contains a $B_h$ set $S^*$ satisfying
	\begin{equation*}
		S^*(n) \geq n^{u(\delta) + o(1)}.
	\end{equation*}
\end{theorem}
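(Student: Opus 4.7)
\medskip
\noindent\textbf{Proof plan for \cref{thm:transfer}.} The plan is to construct $S^*$ by associating to each $s \in S$ a nearby element $r(s) \in R_\delta$ and then exploit the $(1-\delta,\gamma)$--strong $B_h$ property (with $\gamma = 2h \cdot 2^{1+1/\delta}$) to control the resulting perturbation. Concretely, for each $s \in S$ I would set $I_s = [s, s + L_s)$ with $L_s = 2^{1/\delta} s^{1-\delta}$, so that $2h L_s = \tfrac{1}{2} \gamma s^{1-\delta}$. Applying the strong $B_h$ condition to the $h$--tuples $(s', s, \ldots, s)$ and $(s, s, \ldots, s)$, for any two consecutive elements $s < s'$ of $S$ one obtains $s' - s \geq \gamma (s')^{1-\delta} \geq L_s$, so the intervals $I_s$ are pairwise disjoint. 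Since the elements of $R_\delta$ are chosen independently,
\begin{equation*}
	\PP \bigl( R_\delta \cap I_s \neq \emptyset \bigr) = 1 - \prod_{m \in I_s} \bigl( 1 - m^{-(1-\delta)} \bigr) \geq 1 - \exp \bigl( - \tfrac{1}{2} L_s (2s)^{-(1-\delta)} \bigr) \geq c_0
\end{equation*}
for some constant $c_0 = c_0(\delta) > 0$ and all sufficiently large $s$, and these events are independent across distinct $s \in S$ by disjointness of the $I_s$. Then I would define $r(s) = \min \bigl( R_\delta \cap I_s \bigr)$ whenever this set is non-empty, and set $S^* = \{ r(s) : s \in S,\ R_\delta \cap I_s \neq \emptyset \} \subset R_\delta$; the map $s \mapsto r(s)$ is order-preserving and injective.

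To verify that $S^*$ is a $B_h$ set, I would suppose for contradiction that $r(s_1) + \ldots + r(s_h) = r(s_1') + \ldots + r(s_h')$ with $\max \{r(s_i)\} \neq \max \{r(s_i')\}$. Order preservation forces $\max \{s_i\} \neq \max \{s_i'\}$, so the strong $B_h$ condition on $S$ yields $\bigl| \sum_i s_i - \sum_i s_i' \bigr| \geq \gamma M^{1-\delta}$, where $M$ denotes the largest of the $s_i, s_i'$. Writing $\epsilon_i = r(s_i) - s_i \in [0, L_{s_i})$ and $\epsilon_i'$ similarly, the equality of $r$--sums translates into $\bigl| \sum_i s_i - \sum_i s_i' \bigr| = \bigl| \sum_i \epsilon_i' - \sum_i \epsilon_i \bigr| \leq 2h L_M = \tfrac{1}{2} \gamma M^{1-\delta}$, a contradiction.

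For the density bound, I would apply a standard Chernoff inequality to the independent Bernoulli variables $X_s = \mathbbm{1}[R_\delta \cap I_s \neq \emptyset]$: the probability of the event $\sum_{s \in S \cap [n]} X_s < \tfrac{1}{2} c_0 |S \cap [n]|$ decays exponentially in $|S \cap [n]|$, which by hypothesis grows polynomially in $n$. A Borel--Cantelli argument along a dyadic subsequence of $n$, combined with monotonicity, then gives almost surely $S^*(n) \geq n^{u(\delta) + o(1)}$ (noting that $r(s) \leq s + L_s = (1+o(1))\, s$, so the contributions from $S \cap [n]$ lie in $S^* \cap [(1+o(1))n]$). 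I expect the main technical point to lie in the careful coordination of the constants $\gamma$ and $L_s$ in the perturbation step, which is precisely what the hypothesis $\gamma = 2h \cdot 2^{1+1/\delta}$ is designed to accommodate; the density estimate via concentration is essentially routine.
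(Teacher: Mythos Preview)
Your proposal is correct and follows essentially the same approach as the paper: associate to each $s\in S$ an interval of length $\asymp s^{1-\delta}$, show that $R_\delta$ hits each interval with probability bounded away from zero and independently, replace $s$ by an element of $R_\delta$ in that interval, use the $(1-\delta,\gamma)$--strong $B_h$ hypothesis to absorb the perturbation, and conclude the density bound via Chernoff and Borel--Cantelli. The only cosmetic difference is that the paper works with the fixed partition $I_i=\NN\cap[i^{1/\delta},(i+1)^{1/\delta})$ and first thins $S$ so that $|S\cap I_i|\le 1$, whereas you anchor the intervals at the elements of $S$ and derive their disjointness directly from the strong $B_h$ condition; your variant avoids the thinning step but is otherwise the same argument.
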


We will follow the approach of Kohayakawa et al. in order to prove \cref{thm:transfer}. We start by giving some  definitions. We partition the set of integers into intervals $\NN = \bigcup_{i \geq 1} I_i$, where
\begin{equation*}
	I_i	= \NN \cap [i^{1/\delta},(i+1)^{1/\delta})
\end{equation*}
for any $i \geq 1$. For $a,b \in \NN$, we write
\begin{equation*}
	a \sim b	
\end{equation*}
if  $a,b \in I_i$ for some $i \geq 1$. The goal is now to show that $R_{\delta}$ intersects each $I_i$ with some positive probability whereas, for $\alpha = 1-\delta$ and $\gamma = 2h2^{1+1/\delta}$, the set $S$ given by \cref{thm:abBh} intersects each $I_i$ at most once. One then simply shifts each element of $S$ within the respective intervals $I_i$ so that it lands in the random set $R_{\delta}$, assuming that $R_{\delta}$ meets that interval. The fact that $S$ is a $(1-\delta,2h2^{1+1/\delta})$--strong $B_h$ set gives one enough slack for the resulting subset of $R_{\delta}$ to still be a $B_h$ set.

In order to follow this approach, we will first need some auxiliary results. A simple proof of the following statement can be found in the paper of Kohayakawa et al.

\begin{lemma}[Lemma 13 in~\cite{KohayakawaLeeMoreiraRodl-2019}] \label{lemma:random_intersecting}
	For every $0 < \delta \leq 1$ there exists some $i_0 = i_0(\delta)$ such that if $i \geq i_0$ then
	\begin{equation*}
		\PP(R_{\delta} \cap I_i \neq \emptyset) \geq 1/3.
	\end{equation*}
\end{lemma}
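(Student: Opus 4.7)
The plan is to bound the complementary probability $\PP(R_\delta \cap I_i = \emptyset)$ from above by $2/3$ for all sufficiently large $i$. By independence of the choices defining $R_\delta$, we have
\begin{equation*}
\PP(R_\delta \cap I_i = \emptyset) = \prod_{m \in I_i} (1 - m^{-(1-\delta)}) \leq \exp\!\left( -\sum_{m \in I_i} m^{-(1-\delta)} \right),
\end{equation*}
using the standard inequality $1-x \leq e^{-x}$ valid for $x \in [0,1]$. Thus the task reduces to showing that the exponent $\sum_{m \in I_i} m^{-(1-\delta)}$ is eventually bounded below by a constant that is at least $\log(3/2)$; in fact the natural target is to show it tends to $1/\delta \geq 1$ as $i \to \infty$.

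To estimate this sum, I would use the fact that $x \mapsto x^{-(1-\delta)}$ is decreasing and compare to an integral. Setting $m_- = \lceil i^{1/\delta} \rceil$ and $m_+ = \lceil (i+1)^{1/\delta} \rceil - 1$, one obtains
\begin{equation*}
\sum_{m \in I_i} m^{-(1-\delta)} \geq \int_{m_-}^{m_+ + 1} x^{-(1-\delta)}\, dx = \frac{(m_+ + 1)^{\delta} - m_-^{\delta}}{\delta}.
\end{equation*}
Since $(m_+ +1)^{\delta} \geq i+1$ and $m_-^{\delta} \leq i + O(i^{1-1/\delta})$, for $\delta < 1$ the right-hand side converges to $1/\delta$ as $i \to \infty$; the case $\delta = 1$ is trivial as $R_1 = \NN$ deterministically.

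Combining these estimates gives $\PP(R_\delta \cap I_i = \emptyset) \leq \exp(-1/\delta + o(1))$ as $i \to \infty$. Since $1/\delta \geq 1$ and $e^{-1} < 2/3$, there exists $i_0 = i_0(\delta)$ such that for every $i \geq i_0$ this probability is at most $2/3$, yielding $\PP(R_\delta \cap I_i \neq \emptyset) \geq 1/3$ as required.

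The only delicate point is controlling the discretisation error between $I_i$ and the continuous interval $[i^{1/\delta}, (i+1)^{1/\delta})$, which matters most when $\delta$ is close to $1$ and $|I_i|$ may consist of very few integers. For $\delta < 1$ this is absorbed into the $o(1)$ because $|I_i| \sim \frac{1}{\delta} i^{1/\delta - 1} \to \infty$; for $\delta$ close to $1$ the slack between $e^{-1/\delta}$ and $2/3$ is still comfortable, so a careful choice of $i_0$ handles it uniformly.
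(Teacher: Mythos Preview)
Your argument is correct. The paper does not actually give a proof of this lemma; it simply cites it as Lemma~13 of Kohayakawa, Lee, Moreira and R\"odl, noting that a simple proof can be found there. The approach you take---bounding $\PP(R_\delta \cap I_i = \emptyset)$ via $1-x \le e^{-x}$ and then comparing $\sum_{m \in I_i} m^{\delta-1}$ to the integral $\int x^{\delta-1}\,dx$ over $[i^{1/\delta},(i+1)^{1/\delta})$---is exactly the standard route and is presumably what the cited paper does as well.

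One small comment: your final paragraph worries about uniformity in $\delta$, but this is unnecessary since $i_0$ is explicitly allowed to depend on $\delta$. For each fixed $\delta<1$ you have $|I_i|\to\infty$, the discretisation error is genuinely $o(1)$, and the limit $e^{-1/\delta}\le e^{-1}<2/3$ gives the conclusion; the case $\delta=1$ is handled separately as you note. So the argument is already complete without the hedge about ``a careful choice of $i_0$ handles it uniformly.''
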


The following lemma establishes that the intervals $I_i$ are small with respect to the elements contained in them.

\begin{lemma} \label{lemma:Ii}
	For any $0 < \delta \leq 1$ and $i \geq 1$ we have $|I_i| < 2^{1/\delta} \, i^{1/\delta - 1}$.
\end{lemma}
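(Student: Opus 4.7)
The plan is to bound $|I_i|$ by the length of the real interval $[i^{1/\delta}, (i+1)^{1/\delta})$ up to a $+1$ rounding correction, and then show that this length is bounded by $(2^{1/\delta}-1)\, i^{1/\delta - 1}$, so that the $+1$ is absorbed. Concretely, I would first note that $|I_i|$, being the number of integers in $[i^{1/\delta}, (i+1)^{1/\delta})$, is at most $(i+1)^{1/\delta} - i^{1/\delta} + 1$, since $\lceil b \rceil - \lceil a \rceil \leq b - a + 1$.

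Next, I would factor out $i^{1/\delta}$ to write
\begin{equation*}
	(i+1)^{1/\delta} - i^{1/\delta} = i^{1/\delta}\bigl((1+1/i)^{1/\delta} - 1\bigr).
\end{equation*}
Since $0 < \delta \leq 1$ implies $a := 1/\delta \geq 1$, the function $x \mapsto (1+x)^{a}$ is convex on $[0,1]$ and hence lies below its chord joining $(0,1)$ to $(1, 2^{a})$, giving $(1+x)^{a} \leq 1 + (2^{a} - 1)x$ for every $x \in [0,1]$. Applying this with $x = 1/i \leq 1$ yields $(1+1/i)^{1/\delta} - 1 \leq (2^{1/\delta} - 1)/i$, so
\begin{equation*}
	(i+1)^{1/\delta} - i^{1/\delta} \leq (2^{1/\delta} - 1)\, i^{1/\delta - 1}.
\end{equation*}

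Finally, since $1/\delta - 1 \geq 0$, we have $i^{1/\delta - 1} \geq 1$ for every $i \geq 1$, so the rounding correction $+1$ is absorbed into the gap between $2^{1/\delta}-1$ and $2^{1/\delta}$, yielding
\begin{equation*}
	|I_i| \leq (2^{1/\delta} - 1)\, i^{1/\delta - 1} + 1 \leq 2^{1/\delta}\, i^{1/\delta - 1},
\end{equation*}
with strict inequality whenever $i^{1/\delta-1} > 1$ or when the ceiling loss is nonzero. The only subtle point is choosing the right estimate: a direct application of the mean value theorem gives $(i+1)^{1/\delta} - i^{1/\delta} \leq (1/\delta)(i+1)^{1/\delta-1}$, which carries an unwanted factor $1/\delta$ that blows up as $\delta \to 0$ and fails to match the target constant $2^{1/\delta}$. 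Using the chord above the convex graph of $(1+x)^{1/\delta}$ on $[0,1]$ rather than the tangent replaces this factor by the sharp constant $2^{1/\delta}-1$, which is precisely what is needed.
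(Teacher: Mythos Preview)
Your proof is correct and follows the same overall strategy as the paper: both factor
\[
(i+1)^{1/\delta} - i^{1/\delta} = i^{1/\delta-1}\cdot i\bigl[(1+1/i)^{1/\delta}-1\bigr]
\]
and then show that the second factor is at most $2^{1/\delta}-1$, with convexity of $x\mapsto x^{1/\delta}$ doing the work. The paper argues this by differentiating $f(x)=x\bigl[(1+1/x)^{1/\delta}-1\bigr]$ and using $g(x+1)-g(x)\le g'(x+1)$ for the convex $g(x)=x^{1/\delta}$ to conclude $f'\le 0$, whence $f(i)\le f(1)=2^{1/\delta}-1$. Your route is a touch more direct: you apply the secant-line inequality for the convex function $(1+x)^{1/\delta}$ on $[0,1]$ to get $(1+1/i)^{1/\delta}-1\le (2^{1/\delta}-1)/i$ in one step, avoiding the derivative computation. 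You are also more careful than the paper about the integer count, carrying the $+1$ from $\lceil b\rceil-\lceil a\rceil\le b-a+1$ and absorbing it via $i^{1/\delta-1}\ge 1$; the paper simply writes $|I_i|\le (i+1)^{1/\delta}-i^{1/\delta}$, which is not literally true for arbitrary real endpoints but is harmless here.
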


\begin{proof}
	Clearly
	\begin{equation*}
		|I_i| \leq (i+1)^{1/\delta} - i^{1/\delta} = \big( i \, (1+1/i)^{1/\delta} - i \big) \, i^{1/\delta-1}.
	\end{equation*}
	We would therefore like to show that $i \, (1+1/i)^{1/\delta} - i \leq 2^{1/\delta}$. Let
	\begin{equation*}
		f(x) = x \, (1+1/x)^{1/\delta}-x = x \left[ \left(\frac{x+1}{x}\right)^{1/\delta}- 1\right]
	\end{equation*}
	and note that
	\begin{align*}
		f'(x) & = \left(\frac{x+1}{x}\right)^{1/\delta} - 1 - \frac{1/\delta}{x} \left(\frac{x+1}{x}\right)^{1/\delta-1} = \frac{(x+1)^{1/\delta} -  x^{1/\delta} - 1/\delta \, (x+1)^{1/\delta - 1}}{x^{1/\delta}}.
	\end{align*}
	We would like to show that $f'(x) \leq 0$ for all $x \geq 1$. Clearly $x^{1/\delta} > 0$, so let us show that the numerator is not positive. Since $1/\delta \geq 1$, the function $g(x) = x^{1/\delta}$ is convex and therefore
	\begin{equation*}
		(x+1)^{1/\delta} - x^{1/\delta} = g(x+1) - g(x) \leq g'(x+1) = 1/\delta \, (x+1)^{1/\delta-1}.
	\end{equation*}
	It follows $f'(x) \leq 0$ for $x \geq 1$ and we therefore have by monotonicity
	\begin{equation*}
		i \, (1+1/i)^{1/\delta} - i \leq f(1) = 2^{1/\delta} - 1 < 2^{1/\delta},
	\end{equation*}
	giving the desired statement.
\end{proof}

\begin{lemma} \label{lemma:canonical}
	Let $h \geq 2$ and $0 < \delta \leq 1$. If $S \subset \NN$ is a $(1-\delta,2^{1+1/\delta})$--strong $B_h$ set, then, for all $i\geq 1$,
	\begin{equation*}
		|S \cap I_i| \leq 2.
	\end{equation*}
\end{lemma}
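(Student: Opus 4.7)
The plan is to argue by contradiction. Suppose some interval $I_i$ contains three distinct elements $a < b < c$ of $S$; I will exhibit a pair of $h$-tuples in $S^h$ whose sums are too close together, violating the $(1-\delta, 2^{1+1/\delta})$-strong $B_h$ condition \cref{eq:a-strong-bh}.

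The key idea is to pick the witness tuples $(x_1, \ldots, x_h) = (c, a, a, \ldots, a)$ and $(y_1, \ldots, y_h) = (b, b, a, a, \ldots, a)$. Both tuples lie in $S^h$ (padding only with copies of $a$), and they have distinct maxima $c$ and $b$ since $c > b$. Their $h$-fold sums differ by
\begin{equation*}
	\big| \big( c + (h-1)a \big) - \big( 2b + (h-2)a \big) \big| \ = \ |(c - b) - (b - a)| \ \leq \ c - a,
\end{equation*}
where the last inequality uses that $c - b$ and $b - a$ are both nonnegative.

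To derive a contradiction, I would combine two simple estimates. Since $a, c \in I_i \subset [i^{1/\delta}, (i+1)^{1/\delta})$, \cref{lemma:Ii} gives $c - a < (i+1)^{1/\delta} - i^{1/\delta} < 2^{1/\delta} \, i^{1/\delta - 1}$. On the other hand, $c \geq i^{1/\delta}$ implies $c^{1-\delta} \geq i^{1/\delta - 1}$, so the required lower bound
\begin{equation*}
	2^{1+1/\delta} \, \max\{x_j^{1-\delta}, y_j^{1-\delta}\} \ = \ 2^{1+1/\delta} \, c^{1-\delta} \ \geq \ 2^{1+1/\delta} \, i^{1/\delta - 1}
\end{equation*}
strictly exceeds the upper bound on $|c+a-2b|$ from the previous paragraph, yielding the desired contradiction.

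I do not anticipate any real obstacle: once the right witness tuples are identified, the estimates are immediate. The only subtle point is noticing that $|c + a - 2b| = |(c-b)-(b-a)|$ is bounded by $c - a$ itself (rather than $2(c-a)$), which creates precisely the factor-of-two room between $2^{1/\delta}$ and $\gamma = 2^{1+1/\delta}$ needed to close the argument.
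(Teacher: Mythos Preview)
Your proof is correct and is essentially the same argument as the paper's: both reduce to the Sidon-type witness $(c+a)$ versus $2b$ (the paper writes it as $(x+z)$ vs.\ $(y+y)$ without explicitly padding by copies of $a$, since a strong $B_h$ set is in particular a strong Sidon set), and then bound the difference using \cref{lemma:Ii} and $c\ge i^{1/\delta}$. The only cosmetic difference is that the paper uses the looser estimate $|x-y|+|z-y|\le 2|I_i|$ and absorbs the factor~$2$ into $\gamma=2^{1+1/\delta}$, whereas you observe the sharper bound $|(c-b)-(b-a)|\le c-a<|I_i|$; either way the contradiction goes through with room to spare.
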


\begin{proof}
	Assume to the contrary that there are $x > y > z$ such that $x,y,z \in S \cap I_i$ for some $i \geq 1$. By \cref{lemma:Ii} it follows that
	\begin{align*}
		\big| \big( x + z \big) - \big( y + y \big) \big| & \leq |x-y| + |z-y| \leq 2|I_i| < 2^{1 + 1/\delta} \, i^{1/\delta - 1} \leq   2^{1+1/\delta} \, x^{1-\delta}.
	\end{align*}
	This is in contradiction to the fact that $S$ is of course a $(1-\delta,2^{1+1/\delta})$--strong Sidon set since it is a $(1-\delta,2^{1+1/\delta})$--strong $B_h$ set.
\end{proof}

\begin{lemma} \label{lemma:shifting}
	Let $h \geq 2$ and $0 < \delta \leq 1$. If $S = \{s_i: i >0\} \subset \NN$ is a $(1-\delta,h 2^{1/\delta})$--strong $B_h$ set, then any set $\tilde{S} = \{\tilde{s}_i: i>0\}$ satisfying $\tilde{s}_i \sim s_i$ for all $i \geq 1$ is a $B_h$ set.
\end{lemma}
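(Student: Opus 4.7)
The plan is to argue by contradiction: assume $\tilde{S}$ fails to be a $B_h$ set and produce a configuration in $S$ violating its strong $B_h$ property. Suppose therefore that there exist $\tilde{x}_1,\dots,\tilde{x}_h,\tilde{y}_1,\dots,\tilde{y}_h \in \tilde{S}$ satisfying $\sum_a \tilde{x}_a = \sum_a \tilde{y}_a$ with, WLOG, $\max_a \tilde{x}_a > \max_a \tilde{y}_a$. Lift these to $x_a, y_a \in S$ via the canonical bijection $s_i \leftrightarrow \tilde{s}_i$, so that $\tilde{x}_a \sim x_a$ and $\tilde{y}_a \sim y_a$.

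The first main task is to ensure that the strict max-inequality transfers from $\tilde{S}$ back to $S$. I would achieve this by cancellation: whenever some $s \in S$ appears with multiplicity $m^x$ on the $x$-side and $m^y$ on the $y$-side, remove $\min(m^x, m^y)$ copies of it from both sides, which by the bijection simultaneously removes $\min(m^x, m^y)$ copies of the corresponding $\tilde{s}$ from both sides of the $\tilde{s}$-equation, preserving $\sum \tilde{x}_a = \sum \tilde{y}_a$. The top value $\tilde{s}^{\ast} := \max_a \tilde{x}_a$ cannot appear on the $y$-side by the strict max-inequality, and so is never cancelled; thus the post-cancellation multisets $X', Y' \subset S$ are disjoint and $\max X' \neq \max Y'$ follows automatically. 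Padding back up to length $h$ with copies of $s_0 = \min S$ restores the required $h$-tuple structure without altering sums, maxima, or their inequality.

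For the shift estimate, set $M = \max(X' \cup Y')$ and apply \cref{lemma:Ii}: since $x_a$ and $\tilde{x}_a$ lie in the same $I_{k_a}$ with $x_a \geq k_a^{1/\delta}$, one has $|\tilde{x}_a - x_a| < 2^{1/\delta} x_a^{1-\delta} \leq 2^{1/\delta} M^{1-\delta}$, and analogously for $\tilde{y}_a$. Using $\sum \tilde{x}_a - \sum \tilde{y}_a = 0$, the triangle inequality gives
\[
\Bigl| \sum_a x_a - \sum_a y_a \Bigr| \leq \sum_a |\tilde{x}_a - x_a| + \sum_a |\tilde{y}_a - y_a| < 2h \cdot 2^{1/\delta} M^{1-\delta},
\]
while the $(1-\delta,\gamma)$--strong $B_h$ property of $S$, combined with $\max X' \neq \max Y'$, forces the reverse inequality $|\sum_a x_a - \sum_a y_a| \geq \gamma M^{1-\delta}$, producing the desired contradiction for the hypothesised constant $\gamma$. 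The main obstacle is the first step: shifts can reorder two elements of $S$ sitting in a common interval $I_k$, so that $\max_a x_a$ need not correspond to $\max_a \tilde{x}_a$; the cancellation argument circumvents this by reducing to disjoint multisets, and \cref{lemma:canonical} (applicable since strong $B_h$ implies strong Sidon) bounds how severe within-interval swaps can be in the first place.
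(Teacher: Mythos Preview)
Your approach is the same as the paper's: assume a $B_h$ collision in $\tilde{S}$, lift it to $S$ via the bijection $s_i \leftrightarrow \tilde{s}_i$, and use \cref{lemma:Ii} together with the triangle inequality to bound the resulting difference of sums in $S$, contradicting the strong $B_h$ property. Your cancellation-and-padding argument is in fact more careful than the paper, which simply writes ``we may assume $x_1>y_1$'' after lifting without justifying why the strict max-inequality transfers from $\tilde{S}$ to $S$; your reduction to disjoint multisets handles this cleanly.

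One point deserves comment: your shift estimate gives
\[
\Bigl|\sum_a x_a - \sum_a y_a\Bigr| < 2h\cdot 2^{1/\delta} M^{1-\delta},
\]
which does \emph{not} contradict the lower bound $\gamma M^{1-\delta}$ for the stated value $\gamma = h\,2^{1/\delta}$; you need $\gamma \geq 2h\,2^{1/\delta}$. The paper's own proof has the same slip, writing $h|I_{i_1}|$ where the triangle inequality over the $2h$ shift terms actually gives $2h|I_{i_1}|$. This is harmless for the downstream application, since \cref{thm:transfer} assumes $\gamma = 2h\cdot 2^{1+1/\delta}$, but strictly speaking the lemma as stated is off by a factor of two in both proofs.
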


\begin{proof}
	Assume to the contrary that there are $\tilde{x}_1,\tilde{y}_1,\ldots,\tilde{x}_h,\tilde{y}_h \in \tilde{S}$ satisfying $\{\tilde{x}_1,\ldots,\tilde{x}_h\} \neq \{\tilde{y}_1,\ldots,\tilde{y}_h\}$ and $\tilde{x}_1 + \ldots + \tilde{x}_h = \tilde{y}_1 + \ldots + \tilde{y}_h$. Let $x_i \sim \tilde{x}_i$ and $y_i \sim \tilde{y}_i$ denote the corresponding elements in $S$ for all $1 \leq i \leq h$. We may assume $x_1 \geq \ldots \geq x_h$, $y_1 \geq \ldots \geq y_h$ as well as $x_1 > y_1$ and let $i_1 \geq 1$ such that $x_1 \in I_{i_1}$. By \cref{lemma:Ii}, we have
	\begin{align*}
		|(x_1 + \ldots + x_h) - (y_1 + \ldots + y_h)| \leq h |I_{i_1}| < h 2^{1/\delta} \, i_1^{1/\delta - 1} \leq h 2^{1/\delta} x_1^{1-\delta},
	\end{align*}
	contradicting the fact that $S$ is a $(1-\delta,h 2^{1/\delta})$--strong $B_h$ set.
\end{proof}

We are now ready to prove \cref{thm:transfer} along the lines of the proof of Theorem~12 in~\cite{KohayakawaLeeMoreiraRodl-2019}.

\begin{proof}[Proof of \cref{thm:transfer}]
	Let $S \subset \NN$ be some fixed $(1-\delta,h2^{1 + 1/\delta})$--strong $B_h$ set satisfying $S(n) = n^{u(\delta) + o(1)}$. By \cref{lemma:canonical} we may without loss of generality assume that $|S \cap I_i| \leq 1$ for all $i \geq 1$: observe that we can remove one element for each $I_i$ such that $|I_i \cap S| = 2$ without impacting the growth condition of the resulting set.
	
	Writing $S = \{s_i: i>0 \}$ and for each index $j$, let $i_j$ be such that $s_j \in I_{i_j}$. Let $i_0 = i_0(\delta)$ be as given by \cref{lemma:random_intersecting} and define the random set
	\begin{equation*}
		J = \{ j \in \NN : i_j \geq i_0 \text{ and } R_\delta \cap I_{i_j} \neq \emptyset \}.
	\end{equation*}
	For each $j \in J$, let $\tilde{s}_j$ denote an element in $R_\delta$ satisfying $\tilde{s}_j \sim s_j$. By construction $S^\star = \{\tilde{s}_j : j \in J\}$ is a subset of $R_\delta$ and by \cref{lemma:shifting} it is a $B_h$ set. 
	
	Let us show that, with probability $1$, this set still satisfies $S^\star(n) = n^{u(\delta)+O(1)}$. By Chernoff's bound and \cref{lemma:random_intersecting}, we know that for any $\varepsilon > 0$ there exists some $n_0 = n_0(\varepsilon)$ such that, for any $n \geq n_0$ we have 
	\begin{equation*}
		\PP \big( S^*(n) < n^{u(\delta)-\varepsilon} \big) \leq 2 \exp \big(-n^{u(\delta)-2\varepsilon}/3 \big) \leq 1/n^2.
	\end{equation*}
	Using now Borel--Cantelli Lemma and the fact that $\sum_{n=1}^\infty 1/n^2 < \infty$, it follows that with probability $1$ there exists some $n_1 = n_1(R_{\delta})$ such that $S^{\star}(n) \geq n^{u(\delta)-\varepsilon}$ for any $n \geq n_1$. The desired statement therefore follows.
\end{proof}

\section{Remarks and Open Questions} \label{sec:remarks}

We believe that the lower bound obtained in \cref{thm:aSidon_lower} is a natural extension of the results of Ruzsa and Cilleruelo and that any advance in closing the gap between the upper and lower bounds for strong infinite Sidon sets should probably come as a result of an improvement on either bound in the case of normal 'non-strong' Sidon sets. Unfortunately, this problem has proven surprisingly defiant despite a fair amount of attention.

Concerning the question of the maximum size of Sidon or $B_h$ sets in infinite random sets of integers, \cref{thm:random} currently gives the best lower bound when $h = 2$ and $5/6 < \delta < 1$. In the case of $h > 2$ no other bounds are known, though we believe that using results of Dellamonica et al.~\cite{DellamonicaKohayakawaLeeRodlSamotij-2016,DellamonicaKohayakawaLeeRodlSamotij-2018} for the case of finite random sets, one can establish exact exponents whenever $0 < \delta < h/(2h-1)$. Note that Kohayakawa et al.~\cite{KohayakawaLeeMoreiraRodl-2018} also made use of the case of finite random sets established in \cite{KohayakawaYoshiharuLeeRodlVojtechSamotij-2015}. When $\delta$ is large enough, we believe that \cref{thm:random} again gives the best lower bound that can be obtained without requiring significant new insight into the non-strong and non-random case.

Kohayakawa et al.~\cite{KohayakawaLeeMoreiraRodl-2019} also asked if their upper bound on the size of infinite $\alpha$-strong Sidon sets can be strengthened along the lines of the results of Erd\H{o}s and Tur\'an as well as St\"ohr~\cite{Stoehr-1955} mentioned in the introduction. We extend this question to any $\alpha$-strong $B_h$ set along the lines of \cref{thm:aBh_upper}.

\begin{question}
	Let $0 \leq \alpha < 1$ and $h \geq 2$. Does any $\alpha$-strong $B_h$ set $S$ satisfy
	\begin{equation*}
		\liminf_{n \to \infty} S(n)/n^{(1-\alpha)/h} = 0?
	\end{equation*}
\end{question}

\medskip
\noindent \textbf{Acknowledgements. }	This work was initiated at the annual workshop of the Combinatorics and Graph Theory group at Freie Universität Berlin in September of 2019. The authors would like to thank the institution for enabling this research. Finally, the third author would like to thank Tibor Szabó and the Combinatorics and Graph Theory group at Freie Universität Berlin for their hospitality during a research stay.


\end{document}